\newtheorem{theorem}{Theorem}[section]
\newtheorem{lemma}[theorem]{Lemma}
\theoremstyle{definition}
\theoremstyle{remark}
\newtheorem{proposition}[theorem]{Proposition}
\numberwithin{equation}{section}
\numberwithin{equation}{section}
\numberwithin{equation}{section}
\def\no{\nonumber}
  \def\no{\nonumber}
\def\bW{{\mathbf W}}
  \def\bD{{\mathbf D}} 
 \def\bH{{\mathbf H}}
 \def\bW{{\mathbf W}} \def\bX{{\mathbf X}}
  \def\bh{{\mathbf h}}
\def\bx{{\mathbf x}} \def\by{{\mathbf y}} 
\def\bbeta{{\boldsymbol{\beta}}}
 \def\bdelta{{\boldsymbol{\delta}}}
 \def\bGamma{{\boldsymbol{\Gamma}}}
 \def\bPhi{{\boldsymbol{\Phi}}}
\def\bSigma{{\boldsymbol{\Sigma}}}
\def\hbbeta{\widehat{\boldsymbol \beta}}
    \def\Cov{\hbox{Cov}}
\def\real{\mathop{{\rm I}\kern-.2em\hbox{\rm R}}\nolimits}
\def\1overn{\frac{1}{n}}
\def\bel{\begin{eqnarray}\label}  \def\eel{\end{eqnarray}}
\def\bes{\begin{eqnarray*}}  \def\ees{\end{eqnarray*}}
\def\PT{\hbbeta_{1,\tau}^{\rm PT}}
\def\FM{\hbbeta_{1,\tau}^{\rm FM}}
\def\SM{\hbbeta_{1,\tau}^{\rm SM}}
\def\PS{\hbbeta_{1,\tau}^{\rm PS}}
\def\SS{\hbbeta_{1,\tau}^{\rm S}}
\def\btau{\bbeta_{1,\tau}}
\def\bvt{{\boldsymbol \vartheta}}
\begin{document}

\title{Pretest and Stein-Type Estimations in Quantile Regression Model}


\author{Bahad{\i}r Y\"{u}zba\c{s}{\i}$^\dag$, Yasin Asar$^\S$, M.\c{S}amil \c{S}{\i}k$^\ddag$ and Ahmet Demiralp$^\clubsuit$}

\date{\today}
\maketitle

{\footnotesize
\center { \text{  $^\dag\ddag\clubsuit$ Department of Econometrics}\par
  { \text{ Inonu University}}\par
  {\text{ Malatya 44280, Turkey}}\par
  { \texttt{E-mail address: $^\dag$b.yzb@hotmail.com, $^\ddag$mhmd.sml85@gmail.com and $^\clubsuit$ahmt.dmrlp@gmail.com}}\par

  \vskip 0.2 cm

  \text{  $^\S$ Department of Mathematics-computer Sciences
}\par
  { \text{ Necmettin Erbakan University}}\par
  {\text{ Konya 42090, Turkey}}\par
  { \texttt{E-mail address: yasar@konya.edu.tr, yasinasar@hotmail.com}}

}}


\renewcommand\leftmark {\centerline{  \rm Quantile Shrinkage Estimation}}
\renewcommand\rightmark {\centerline{ \rm  Quantile Shrinkage Estimation}}

\renewcommand{\thefootnote}{}
\footnote{2010  {\it AMS Mathematics Subject Classification:}
62J05, 62J07.}

\footnote {Key words and phrases:  Preliminary Estimation; Stein-Type Estimation; Quantile Regression; Penalty Estimation.\par

Corresponding author : Bahad{\i}r Y\"{u}zba\c{s}{\i} 
}

\begin{abstract}
In this study, we consider preliminary test and shrinkage estimation strategies for quantile regression models. In classical Least Squares Estimation (LSE) method, the relationship between the explanatory and explained variables in the coordinate plane is estimated with a mean regression line. In order to use LSE, there are three main assumptions on the error terms showing white noise process of the regression model, also known as Gauss-Markov Assumptions, must be met: (1) The error terms have zero mean, (2) The variance of the error terms is constant and (3) The covariance between the errors is zero i.e., there is no autocorrelation. However, data in many areas, including econometrics, survival analysis and ecology, etc. does not provide these assumptions. Firstly introduced by Koenker, quantile regression has been used to complement this deficiency of classical regression analysis and to improve the least square estimation. The aim of this study is to improve the performance of quantile regression estimators by using pretest and shrinkage strategies. A Monte Carlo simulation study including a comparison with quantile $L_1$--type estimators such as Lasso, Ridge and Elastic Net are designed to evaluate the performances of the estimators. Two real data examples are given for illustrative purposes. Finally, we obtain the asymptotic results of suggested estimators.
\end{abstract}

\maketitle

\section{Introduction}
Consider a linear regression model
\begin{equation}
y_i=\bx_i'\bbeta+\varepsilon _{i},\ \ \
i=1,2,\dots,n,  \label{lin.mod}
\end{equation}%
where $y_{i}$'s are random responses, $\bx_i=\left(
x_{i1},x_{i2},\dots,x_{ip}\right)'$ are known vectors, $%
\bbeta =\left( \beta _{1},\beta _{2},\dots,\beta _{p}\right)
'$ is a vector of unknown regression coefficients, $\varepsilon _{i}{}$'$s$ are unobservable random errors and the superscript $\left( '\right) $ denotes the transpose of a vector or matrix. 

In a standard linear regression model, it is usually assumed that the residuals support Gauss--Markov assumptions. Then, one might want to investigate the average relationship between a set of regressors and the outcome variable based on the conditional mean function $E(y|x)$. Hence, the best linear unbiased estimator (BLUE) of the coefficients $\bbeta$ is given by the LSE, $\hbbeta^{\rm LSE}=(\bX'\bX)^{-1}\bX'\by$, where $\bX=(\bx_1,\dots,\bx_n)'$ and $\by=(y_1,\dots,y_n)'$. Not only, in real life, the assumptions of Gauss--Markov are not provided, but also the estimation of LSE provides only a partial view of the relationship, as we might be interested in describing the relationship at different points in the conditional distribution of $\by$. Furthermore, it doesn't give information about the relationship at any other possible points of interest and it is sensitive to outliers. In order to deal with these problems, there is a large literature. One of the most popular one is Quantile regression \cite{qr1978} has been an increasingly important modeling tool, due to its flexibility in exploring how covariates affect the distribution of the response. Quantile regression gives a more complete and robust representation of the variables in the model and doesn't make any distributive assumption about the error term in the model. The main advantage of quantile regression against LSE is its flexibility in modelling the data in heterogeneous conditional distributions.

On the basis of the quantile regression lies the expansion of the regression model to the conditional quantities of the response variable. A quantile is a one of the equally segmented subsets of a sorted sample of a population. It also corresponds to decomposition of distribution function with equal probability \cite{Gilchrist2000}. 
For a random variable $Y$  with distribution function $\mathcal{F}_{Y}\left(\by\right)=P( Y\le\by ) =\tau$ and
$0\le\tau\le 1$, the $\tau^{th}$ quantile function of $Y$, $\mathcal{Q}_{\tau}(\by)$, is defined to be
\begin{equation}\label{eq:y_tau}
\mathcal{Q}_{\tau}(Y\vert \bX)=\by_{\tau}=\mathcal{F}_{Y}^{-1}(\tau)=\inf\left\{\by\vert \mathcal{F}_{Y}\left(\by\right)\ge\tau\right\}\equiv \bx_i'\bbeta_\tau 
\end{equation}
where $\by_{\tau}$ is the inverse function of $\mathcal{F}_{Y}\left(\tau\right)$ for $\tau$ probability. In other words, the $\tau^{th}$ quantile in a sample corresponds to the probability $\tau$ for a $\by$ value.

As one can determine an estimation of conditional mean $E[\by\left\vert\bX=\bx\right]=\bx_i'\bbeta$ for a random sample $\by=\left(y_{1},y_{2},\dots,y_{n}\right)$ with empirical distribution function $\widehat{\mathcal{F}}_{Y}\left(\tau\right)$ by solving minimization of error squares also an estimation of the $\tau^{th}$  quantile regression coefficients ($\hbbeta_{\tau}$) can be defined by solving the following minimization of absolute errors problem 
\begin{equation}
\underset{\bbeta\in\Re^{p}}{\arg \min} \sum_{i=1}^{n}\rho_\tau (y_i-\bx'_{i}\bbeta),
\end{equation}
where $\rho_\tau(u)=u(\tau-I(u<0))$ is the quantile loss function. Hence, it yields
\begin{equation}\label{eq:est_tau}
\hbbeta_{\tau}=\underset{\bbeta\in\Re^{p}}{\arg \min}\Bigg[ \sum_{i\in\left\{i: y_i\ge \bx_i^{'}\bbeta\right\}}^{n}
\tau\vert\ y_i-\bx'_{i}\bbeta\vert - \sum_{i\in\left\{i: y_i\ <\bx'_i\bbeta\right\}}^{n}
(1-\tau)\vert\ y_i-\bx'_{i}\bbeta\vert\Bigg].
\end{equation}

The quantile regression concept was firstly introduced by \cite{qr1978}. In their study, they identified the autoregression quantile term which is a generalization to a linear model of a basic minimization problem which generates sample quantiles and they produce some homoscedasticity properties and joint asymptotic distribution of the estimator. Thus they made it possible to linear model generalizations of some known strong position estimators for sorted data. After its first presentation, the  quantile regression has become an important and commonly used method and turned into an important tool of applied statistics in the last three decades.

\cite{sen:saleh1987} proposed pretest and Stein-type estimations based on M-type estimator in multiple linear regression models. \cite{Koenker2008} proposed the \textit{quantreg} R package and it is implementations for linear, non-linear and non-parametric quantile regression models.  R and the package \textit{quantreg} are open-source software projects and can be freely downloaded from CRAN: \url{http://cran.r-project.org}. \cite{wei2012} proposed a shrinkage estimator that automatically regulates the possible deviations to protect the model against making incorrect parameter estimates, when some independent variables are randomly missing in a quantile regression model and they tested the performance of this estimator on a finite sample using real data. The books by \cite{Koenker2005} and \cite{Davino2014} are excellent sources for various properties of Quantile Regression as well as many computer algorithms. \cite{hqreg} developed an efficient and scalable algorithm for computing the solution paths for these models with the Zou and Hastie's elastic-net penalty. They also provide an implementation via the R package \textit{hqreg} publicly available on CRAN. Furthermore, it provides Tibshirani's Lasso and Hoerl and Kennard's Ridge estimators based on Quantile Regression models.
The \textit{hqreg} function uses an approximate optimal model instead of calculating the exact value of a single result for a given value, unlike Koenker's \textit{quantreg} function. The package \textit{hqreg} is also an open-source software project and can be freely downloaded from \url{https://cloud.r-project.org/web/packages/hqreg/index.html}.

\cite{ahmed2014} provided a collection of topics outlining pretest and Stein-type shrinkage estimation techniques in a variety of regression modeling problems. Recently, \cite{NA2017} proposed preliminary and positive rule Stein-type estimation strategies based on ridge Huberized estimator in the presence of multicollinearity and outliers. \cite{yuzbasi-ahmed2016} considered ridge pretest, ridge shrinkage and ridge positive shrinkage estimators for a semi-parametric model when the matrix $\bX'\bX$ appears to be ill-conditioned and the coefficients $\bbeta$ can be partitioned as $\left(\bbeta_1,\bbeta_2\right)$, especially when $\bbeta_2$ is close to zero. \cite{yuzbasi-et-al2017} combined ridge estimators with pretest and shrinkage estimators for linear regression models. \cite{AutoReg} very recently suggested quantile shrinkage estimation strategies when the errors are autoregressive.

The aim objective of this study is to improve the performance of quantile regression estimators by combining the idea of pretest and shrinkage estimation strategies with quantile type estimators. Hence, the paper is organized as follows. The full and sub-model estimators based on quantile  regression are given in Section~\ref{ES}. Moreover, the pretest, shrinkage estimators and penalized estimations are also given in this section. The asymptotic properties of the pretest and shrinkage estimators estimators are obtained in Section~\ref{AA}. The design and the results of a Monte Carlo simulation study including a comparison with other penalty estimators are given in Section ~\ref{sim}. Two real data examples are given for illustrative purposes in Section ~\ref{RAA}.  The concluding remarks are presented in Section ~\ref{conc}.

\section{Estimation Strategies}
\label{ES}

In this study we consider the estimation problem for quantile regression models when there are many potential predictors and some of them may not have influence on the response of interest. The analysis will be more precise if ``irrelevant variables" can be left out of the model. As in linear regression problems, this leads us to two different sets of model: The full model where $\bx'_{i}\bbeta$ includes all the $\bx_i$'s with available data; and a candidate sub-model set that includes the predictors of main concern while leaving out irrelevant variables. That is, we consider a candidate subspace where an unknown $p$-dimensional parameter vector $\bbeta$ satisfies a set of linear restrictions 
\begin{equation*}
\bH\bbeta=\bh
\end{equation*}%
where $\bH$ is a $p_2\times p$ matrix of rank $p_2\le p$ and $\bh$ is a given $p_2\times 1$ vector of constants.
Let us denote the full model estimators by $\widehat\bbeta_{\tau}^{\rm FM}$ and the sub-model estimators by $\widehat\bbeta_{\tau}^{\rm SM}$.  
One way to deal with this framework is to use pretest procedures that test whether the coefficients of the irrelevant variables are zero and then estimate parameters in the model that include coefficients that are rejected by the test. Another approach is to use Stein type shrinkage estimators where the estimated regression coefficient vector is shrunk in the direction of the candidate subspace.

\subsection{Full and Sub-Model Estimations}

Linear regression model in $\eqref{lin.mod}$ can be written in a partitioned form as follows
\begin{equation}
y_i=\bx_{1i}'\bbeta_1+\bx_{2i}'\bbeta_2+\varepsilon _{i},\ \ \
i=1,2,\dots,n,  \label{part.lin}
\end{equation}
where $p=p_1+p_2$ and $\bbeta_1$, $\bbeta_2$ parameters are of order $p_1$ and $p_2$, respectively. $\bx_i=\left(\bx_{1i}',\bx_{2i}'\right)$ and $\varepsilon _{i}$'s are errors with the same joint distribution function $\mathcal{F}$.

The conditional quantile function of response variable $y_i$  would be written as follows
\begin{equation}\label{eq:x_tau}
\mathcal{Q}_{\tau}(y_i\vert \bx_i)=\bx_{1i}'\bbeta_{1,\tau}+\bx_{2i}'\bbeta_{2,\tau},\ \ \ 0<\tau< 1 
\end{equation}
The main interest here is to test the null hypothesis
\begin{equation}
\label{NullHp}
H_0:\bbeta_{2,\tau}= \bold{0}_{p_2}.
\end{equation}%

We assume that the sequence of design matrices $\bX$ satisfies the following conditions
\begin{enumerate}
\item[(A1)] $\lim_{n\rightarrow\infty} \frac{1}{n}\sum_{i=1}^n\ \bx_i\bx_i'=\bD,\ \ \
\bD_{0}=\frac{1}{n}\bX'\bX$
\item[(A2)] $\max_{1\leq i\leq n}\Vert{\bx_i}\Vert\Big/\sqrt{n}\rightarrow\ 0$
\end{enumerate}
where $\bD_0$ is a positive definite matrix.%

In order to test \eqref{NullHp}, under the assumptions (A1-A2), we consider the following Wald test statistics is given by
\begin{equation}\label{wald}
\mathcal{W}=nw^{-2}\left(\hbbeta_{2,\tau}^{\rm FM}\right)'\left(\bD^{22}\right)^{-1}\hbbeta_{2,\tau}^{\rm FM}
\end{equation}
where $\bD_{ij}:i,j=1,2$ is the $\left(i,j\right)^{th}$ partition of the $\bD$ matrix and $\bD^{ij}$ is the $\left(i,j\right)^{th}$ partition of the $\bD^{-1}$ matrix. Also $w=\sqrt{\tau\left(1-\tau\right)}\Big/f\left(\mathcal{F}^{-1}\left(\tau\right)\right)$ and $\bD^{22}=\left(\bD_{22}-\bD_{21}\bD_{11}^{-1}\bD_{12}\right)^{-1}$. Here, the term $f\left(\mathcal{F}^{-1}\left(\tau\right)\right)$ , which plays the role of the distress parameter, is generally called the sparsity function \cite{Cooley-Tukey1965} or quantile density function \cite{Parzen1979}.The sensitivity of the test statistic naturally depends on this parameter. The distribution of $\mathcal{W}$  fits the chi-square distribution with $p_2$ degree of freedom under the null hypothesis.

Full model quantile regression estimator is the value that minimizes the following problem
\begin{equation*}
\hbbeta_{\tau}^{\rm FM}=\underset{\bbeta\in\Re^{p}}{\min} \sum_{i=1}^{n}\rho_\tau (y_i-\bx'_{i}\bbeta)
\end{equation*}%

Sub-model (SM) quantile regression estimator of $\bbeta_{\tau}$ is given by \begin{equation*}\hbbeta_{\tau}^{\rm SM}=\left(\hbbeta_{1,\tau}^{\rm SM},\bm0_{p_2}\right)\end{equation*}%
Also it is the value value that minimizes the following problem
\begin{equation*}
\hbbeta_{1,\tau}^{\rm SM}=\underset{\bbeta_1\in\Re^{p_1}}{\min} \sum_{i=1}^{n}\rho_\tau (y_i-\bx_{1i}'\bbeta_1)
\end{equation*}%

\subsection{Pretest and Stein-Type Estimations}

The pretest was firstly applied by \cite{Bancroft1944} for the validity of the unclear preliminary information $(\rm UPI)$ by subjecting it to a preliminary test and according to this verification adding it to the model as a parametric constraint in the process of selecting the information between the sub model and the full model. In the pretest method, after a sub model estimator obtained in addition to obtaining the full model estimator  the validity of the subspace information would be tested by using an appropriate  test statistic $\mathcal{W}$.

The preliminary test estimator $\left(\PT \right)$ could be obtained by following equation
\begin{equation}\label{eq:pre}
\PT =\hbbeta_{1,\tau}^{\rm FM}-\left(\hbbeta_{1,\tau}^{\rm FM}-\hbbeta_{1,\tau}^{\rm SM}\right)\textrm{I}\left(\mathcal{W}<\chi^2_{p_2,\alpha}\right)
\end{equation}
where $\textrm{I}\left(*\right)$ is an indicator function and $\chi^2_{p_2,\alpha}$ is the $100\left(1-\alpha\right)$ percentage point of the $\mathcal{W}$.

The Stein-type shrinkage (S) estimator is a combination of the over--fitted model estimator $\bm{\widehat{\beta}}_{1,\tau}^{\rm FM}$ with the under--fitted $\bm{\widehat{\beta}}_{1,\tau}^{\rm SM}$, given by
\begin{equation*}
\bm{\widehat{\beta}}_{1,\tau}^{\textrm{S}}=\bm{\widehat{\beta}}_{1,\tau}^{\rm FM}-d\left( \bm{\widehat{\beta}}_{1,\tau}^{\rm FM}-\bm{\widehat{\beta}}_{1,\tau}^{\rm SM}\right)\mathcal{W}_{n}^{-1} \text{, } d=(p_2-2)\geq 1,
\end{equation*}

In an effort to avoid the over-shrinking problem inherited by $\bm{\widehat{\beta}}_{1,\tau}^{\textrm{S}}$ we suggest using the positive part of the shrinkage estimator defined by
\begin{eqnarray*}
\hbbeta_{1,\tau}^{\textrm{PS}}&=&\hbbeta_{\tau}%
^{\rm SM}+\left( 1-d\mathcal{W}_{n}^{-1}\right)\textrm{I}\left(\mathcal{W}_{n} >  d\right)\left( \hbbeta_{1,\tau}^{\rm FM}-\bm{%
\widehat{\beta }}_{1,\tau}^{\rm SM}\right)\cr
&=&\bm{\widehat{\beta}}_{1,\tau}^{\textrm{S}}-\left( \hbbeta_{1,\tau}^{\rm FM}-\bm{%
\widehat{\beta }}_{1,\tau}^{\rm SM}\right)
\left( 1-d\mathcal{W}_{n}^{-1}\right)\textrm{I}\left(\mathcal{W}_{n}\leq  d\right).
\end{eqnarray*}%

\subsection{Penalized Estimation}
The penalized estimators for quantile are given by \cite{hqreg}.
\begin{equation}
\hbbeta^{\rm Penalized}_{\tau}=\underset{\bm{\bm{\beta}}}{\arg \min}\sum_{i}\rho(y_i-\bx_{i}'\bbeta)+\lambda\ P(\bbeta)
\end{equation}
where $\rho$ is a quantile loss function, $P$ is a penalty function and $\lambda$ is a tuning parameter.  
\begin{equation*}
P(\bbeta)\equiv P_{\alpha}(\bbeta)=\alpha\Vert\bbeta\Vert_1+\frac{(1-\alpha)}{2}\Vert\bbeta\Vert_2^2
\end{equation*}
which is the lasso penalty for $\alpha=1$ (\cite{lasso}), the ridge penalty for $\alpha=0$ (\cite{Ho-Ke1970}) and the elastic-net penalty for $0\le \alpha\le {1}$ (\cite{elastic-net}).

\section{Asymptotic Analysis}
\label{AA}

In this section, we demonstrate the asymptotic properties of suggested estimators. First, we consider the following theorem. 

\begin{theorem}\label{teo_dist_FM} The distribution of the quantile regression full model with i.i.d. variables and under assumptions A1 and A2,
\begin{equation}
\sqrt{n}(\FM-\bbeta_\tau)\rightarrow^{\hspace{-0.3cm}d}\mathcal{N}(0,w^2\bD^{-1})
\end{equation}
where $\rightarrow^{\hspace{-0.3cm}d}$ denotes convergence in distribution as $n\rightarrow\infty$.
\end{theorem}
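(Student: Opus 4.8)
The plan is to apply the standard convexity (epi-convergence) argument for an M-estimator defined through a convex criterion, following the Knight--Pollard approach adapted to the quantile check loss $\rho_\tau$. Write $\varepsilon_i=y_i-\bx_i'\bbeta_\tau$ for the residuals evaluated at the true parameter, so that by the conditional quantile specification \eqref{eq:x_tau} each residual satisfies $P(\varepsilon_i<0)=\tau$, and let $f=f(\mathcal{F}^{-1}(\tau))$ denote the common error density at the origin. First I would recenter and rescale the objective: setting $\bm\delta=\sqrt{n}(\bbeta-\bbeta_\tau)$, define
\[
Z_n(\bm\delta)=\sum_{i=1}^n\Big[\rho_\tau\big(\varepsilon_i-\tfrac{1}{\sqrt n}\bx_i'\bm\delta\big)-\rho_\tau(\varepsilon_i)\Big],
\]
which is convex in $\bm\delta$ and is minimized precisely at $\hat{\bm\delta}=\sqrt{n}(\hbbeta_\tau^{\rm FM}-\bbeta_\tau)$. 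The aim is to identify the pointwise limit of $Z_n$ and then transfer this convergence to its minimizer.

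Second, the key algebraic device is Knight's identity, $\rho_\tau(u-v)-\rho_\tau(u)=-v\,\psi_\tau(u)+\int_0^v\big(I(u\le s)-I(u\le0)\big)\,ds$ with $\psi_\tau(u)=\tau-I(u<0)$. Applying it to each summand with $v=\bx_i'\bm\delta/\sqrt n$ splits $Z_n(\bm\delta)$ into a linear term $-\bm\delta'\,\bW_n$, where $\bW_n=\frac{1}{\sqrt n}\sum_{i=1}^n\bx_i\,\psi_\tau(\varepsilon_i)$, and a remainder assembled from the integral terms. Since $E[\psi_\tau(\varepsilon_i)]=\tau-P(\varepsilon_i<0)=0$ and $\Var(\psi_\tau(\varepsilon_i))=\tau(1-\tau)$, assumptions (A1)--(A2) let me invoke the Lindeberg central limit theorem to get $\bW_n\to^d\bW\sim\mathcal{N}(0,\tau(1-\tau)\bD)$; here (A2) supplies the negligibility required for Lindeberg's condition and (A1) supplies the limiting covariance $\bD$.

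Third, I would show the integral remainder converges in probability to the deterministic quadratic $\tfrac12 f\,\bm\delta'\bD\bm\delta$. Taking expectations term by term produces $\sum_i\int_0^{\bx_i'\bm\delta/\sqrt n}\big(F(s)-F(0)\big)\,ds$, and the first-order expansion $F(s)-F(0)=f\,s+o(s)$ combined with $\frac1n\sum_i\bx_i\bx_i'\to\bD$ yields the mean $\tfrac12 f\,\bm\delta'\bD\bm\delta$; a variance bound, again using (A2) to guarantee that each increment $\bx_i'\bm\delta/\sqrt n$ is uniformly small, shows the remainder concentrates at this mean. Combining the two pieces gives the fixed-$\bm\delta$ limit $Z_n(\bm\delta)\to^d Z(\bm\delta)=-\bm\delta'\bW+\tfrac12 f\,\bm\delta'\bD\bm\delta$.

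Finally, since every $Z_n$ is convex and the limit $Z$ is a convex quadratic with unique minimizer $f^{-1}\bD^{-1}\bW$, the convexity lemma (pointwise convergence of convex functions on $\Re^p$ forces convergence of their argmins) yields $\sqrt n(\hbbeta_\tau^{\rm FM}-\bbeta_\tau)\to^d f^{-1}\bD^{-1}\bW$. As a linear image of $\bW$, this limit is $\mathcal{N}(0,\,f^{-2}\bD^{-1}\,\tau(1-\tau)\bD\,\bD^{-1})=\mathcal{N}(0,\,\tfrac{\tau(1-\tau)}{f^2}\bD^{-1})$, which is exactly $\mathcal{N}(0,w^2\bD^{-1})$ by the definition $w=\sqrt{\tau(1-\tau)}/f(\mathcal{F}^{-1}(\tau))$ recorded after \eqref{wald}. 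I expect the genuine obstacle to be the third step: rigorously controlling the stochastic fluctuation of the integral remainder as $o_P(1)$ via (A2) is the only truly analytic point, whereas the central limit theorem and the convexity transfer are comparatively routine once that concentration is in hand.
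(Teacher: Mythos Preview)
Your argument is correct and is precisely the Knight--Pollard convexity approach underlying the result in \cite{Koenker2005}. The paper itself does not spell out a proof at all: its entire argument is the single sentence ``The proof can be obtained from \cite{Koenker2005},'' so your write-up is not merely consistent with the paper's approach but in fact supplies the details the paper defers to that reference.
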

\begin{proof}
The proof can be obtained from \cite{Koenker2005}.
\end{proof}

Let $\left\{K_n\right\}$ be a sequence of local alternatives  given by
\begin{equation*}
\ K_n:\boldsymbol{\beta}_{2,\tau}=\frac{\boldsymbol{\kappa}}{\sqrt{n}}
\end{equation*}
where $\boldsymbol{\kappa}=\left(
\kappa_{1},\kappa_{2},\dots,\kappa_{p_{2}}\right)'\in \Re^{p_{2}}$ is a fixed vector. If $\boldsymbol{\kappa}=\bold{0}_{p_2}$, then the null hypothesis is true. Furthermore, we consider the following proposition to establish the
asymptotic properties of the estimators.

\begin{proposition}\label{prop_vector_dist} 
Let $\bvt_{1} = \sqrt{n}\left(\hbbeta_{1, \tau}^{\rm FM}-%
\bbeta_{1,\tau}\right)$, $\bvt_{2} =\sqrt{n}\left( \hbbeta_{1,\tau}^{\rm SM}-%
\bbeta_{1,\tau}\right)$ and $\bvt_{3} =\sqrt{n}\left( \hbbeta_{1,\tau}^{\rm FM}-%
\hbbeta_{1,\tau}^{\rm SM}\right)$. Under the regularity assumptions A1 and A2, Theorem~\ref{teo_dist_FM} and the local alternatives $\left\{ K_{n}\right\}$, as $n\rightarrow \infty$ we have the following joint distributions:

$$\left(
\begin{array}{c}
\bvt_{1} \\
\bvt_{3}%
\end{array}%
\right) \sim\mathcal{N}\left[ \left(
\begin{array}{c}
\boldsymbol{0 }_{p_1} \\
-\boldsymbol{\delta }%
\end{array}%
\right) ,\left(
\begin{array}{cc}
w^2\bD_{11.2}^{-1} & \bSigma_{12} \\
\bSigma_{21} & \bPhi%
\end{array}%
\right) \right],$$


$$\left(
\begin{array}{c}
\bvt_{3} \\
\bvt_{2}%
\end{array}%
\right) \sim\mathcal{N}\left[ \left(
\begin{array}{c}
-\boldsymbol{\delta } \\
\boldsymbol{\delta }%
\end{array}%
\right) ,\left(
\begin{array}{cc}
\bPhi & \bSigma^* \\
\bSigma^* & w^2 \bD_{11}^{-1}%
\end{array}%
\right) \right],$$ \\
where $\bdelta=\bD_{11}^{-1}\bD_{12}\boldsymbol{\kappa}$,  $\bPhi=\omega^2\bD_{11}^{-1}\bD_{12}\bD_{22.1}^{-1}\bD_{21}\bD_{11}^{-1}$, $\bSigma_{12}=-\omega^2\bD_{12}\bD_{21}\bD_{11}^{-1}$ and $\bSigma^*=\bSigma_{21}+w^2\bD_{11.2}^{-1}$.
\end{proposition}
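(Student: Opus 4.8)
The plan is to reduce everything to a single empirical score process, so that the joint limit of $(\bvt_1,\bvt_2,\bvt_3)$ follows from one central limit theorem and the covariance blocks are then pure matrix algebra. Introduce the score
\begin{equation*}
\bS_n=\frac{1}{\sqrt{n}}\sum_{i=1}^{n}\bx_i\,\psi_\tau(\varepsilon_i),\qquad \psi_\tau(u)=\tau-I(u<0),
\end{equation*}
partitioned as $\bS_n=(\bS_{n,1}',\bS_{n,2}')'$ conformably with $(\bbeta_1,\bbeta_2)$. Because $\bx_i'\bbeta_\tau$ is the true conditional $\tau$-quantile, $P(\varepsilon_i<0)=\tau$, so each summand has mean zero and variance $\tau(1-\tau)$. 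Assumption (A2) furnishes the Lindeberg condition and (A1) the limiting second moment, whence $\bS_n$ is asymptotically $\mathcal{N}(\bzero,\tau(1-\tau)\bD)$ with $\Cov(\bS_{n,i},\bS_{n,j})\to\tau(1-\tau)\bD_{ij}$.

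First I would record the Bahadur representation behind Theorem~\ref{teo_dist_FM}, namely $\sqrt{n}(\hbbeta_\tau^{\rm FM}-\bbeta_\tau)=f(\mathcal{F}^{-1}(\tau))^{-1}\bD^{-1}\bS_n+o_p(1)$; its first block gives $\bvt_1=f(\mathcal{F}^{-1}(\tau))^{-1}(\bD^{11}\bS_{n,1}+\bD^{12}\bS_{n,2})+o_p(1)$ with limiting mean $\bzero$. Next I would linearize the sub-model estimating equation $n^{-1/2}\sum_i\bx_{1i}\psi_\tau(y_i-\bx_{1i}'\bbeta_1)=o_p(1)$ in a $\sqrt{n}$-neighborhood of $\bbeta_{1,\tau}$. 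Under $K_n$ the residual evaluated at $\bbeta_{1,\tau}$ equals $\varepsilon_i+\bx_{2i}'\boldsymbol{\kappa}/\sqrt{n}$, and a first-order expansion of the expected score about the quantile contributes the deterministic term $f(\mathcal{F}^{-1}(\tau))\,\bD_{12}\boldsymbol{\kappa}$; dividing by the limiting Hessian $f(\mathcal{F}^{-1}(\tau))\,\bD_{11}$ yields
\begin{equation*}
\bvt_2=f(\mathcal{F}^{-1}(\tau))^{-1}\bD_{11}^{-1}\bS_{n,1}+\bD_{11}^{-1}\bD_{12}\boldsymbol{\kappa}+o_p(1),
\end{equation*}
so $\bvt_2$ has limiting mean $\bdelta=\bD_{11}^{-1}\bD_{12}\boldsymbol{\kappa}$. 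Since $\bvt_3=\bvt_1-\bvt_2$ identically, its mean is $-\bdelta$, matching the stated centering, and joint asymptotic normality of the three vectors is immediate because each is a fixed affine image of the single Gaussian limit of $\bS_n$.

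Finally I would read off the covariance blocks from $\Cov(\bS_{n,i},\bS_{n,j})\to\tau(1-\tau)\bD_{ij}$, the identity $w^2=\tau(1-\tau)/f(\mathcal{F}^{-1}(\tau))^2$, and the Schur-complement relations $\bD^{11}=\bD_{11.2}^{-1}$, $\bD^{12}=-\bD_{11}^{-1}\bD_{12}\bD_{22.1}^{-1}$. These give $\Var(\bvt_1)=w^2\bD_{11.2}^{-1}$ and $\Var(\bvt_2)=w^2\bD_{11}^{-1}$ directly, while the cross term collapses, via $\bD^{11}+\bD_{11}^{-1}\bD_{12}\bD^{21}=\bD_{11}^{-1}\bD_{11.2}\bD_{11.2}^{-1}=\bD_{11}^{-1}$, to $\Cov(\bvt_1,\bvt_2)=w^2\bD_{11}^{-1}$. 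The blocks involving $\bvt_3$ then follow from $\bvt_3=\bvt_1-\bvt_2$: in particular $\Var(\bvt_3)=w^2(\bD_{11.2}^{-1}-\bD_{11}^{-1})$, which is exactly $\bPhi$ by the Woodbury identity $\bD_{11.2}^{-1}=\bD_{11}^{-1}+\bD_{11}^{-1}\bD_{12}\bD_{22.1}^{-1}\bD_{21}\bD_{11}^{-1}$, and the cross-covariances $\Cov(\bvt_1,\bvt_3)=\Var(\bvt_1)-\Cov(\bvt_1,\bvt_2)$ and $\Cov(\bvt_3,\bvt_2)=\Cov(\bvt_1,\bvt_2)-\Var(\bvt_2)$ determine $\bSigma_{12},\bSigma_{21}$ and $\bSigma^*$. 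The main obstacle is not this algebra but the two linearization steps: establishing the asymptotically linear (Bahadur) expansions for the non-differentiable check loss---both for the full model and, under the moving truth $K_n$, for the sub-model---which requires a convexity/stochastic-equicontinuity argument in the spirit of \cite{Koenker2005} to control the remainder uniformly over the $\sqrt{n}$-local parameter region and to show that the drift $\bx_{2i}'\boldsymbol{\kappa}/\sqrt{n}$ enters only through its first-order effect on the expected score.
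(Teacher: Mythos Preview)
Your route is genuinely different from the paper's. The paper does not touch the score process or Bahadur expansion at all; instead it invokes, without justification, the relation $\hbbeta_{1,\tau}^{\rm SM}=\hbbeta_{1,\tau}^{\rm FM}+\bD_{11}^{-1}\bD_{12}\hbbeta_{2,\tau}^{\rm FM}$---an identity that is exact for least squares but only asymptotic for quantile regression---and then reads off means and covariances directly from the blocks of $w^2\bD^{-1}$. Your argument makes explicit what the paper takes for granted: that this relation is a consequence of the two Bahadur representations you write down, and that the bias $\bdelta=\bD_{11}^{-1}\bD_{12}\boldsymbol{\kappa}$ of $\bvt_2$ comes from linearizing the sub-model estimating equation under the drifting alternative $K_n$. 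You also correctly flag the only genuinely nontrivial step---uniform control of the remainder in the nondifferentiable check-loss expansion---which the paper's proof simply does not address. In that sense your proposal is the more complete argument.

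One point worth noting: if you carry your covariance algebra through, you get $\Cov(\bvt_1,\bvt_2)=w^2\bD_{11}^{-1}$, hence $\bSigma_{12}=\Cov(\bvt_1,\bvt_3)=w^2\bD_{11.2}^{-1}-w^2\bD_{11}^{-1}=\bPhi$ and $\bSigma^*=\Cov(\bvt_3,\bvt_2)=\bzero$. These do not match the formulas stated in the proposition. The discrepancy traces to the paper's computation of $\Cov(\hbbeta_{1,\tau}^{\rm FM},\hbbeta_{2,\tau}^{\rm FM})$: in evaluating $\Cov(\hbbeta_{1,\tau}^{\rm FM},\bD_{11}^{-1}\bD_{12}\hbbeta_{2,\tau}^{\rm FM})$ the paper effectively uses $w^2\bD_{12}$ for this cross-covariance rather than the correct $(1,2)$-block $w^2\bD^{12}=-w^2\bD_{11}^{-1}\bD_{12}\bD_{22.1}^{-1}$ of $w^2\bD^{-1}$. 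Your expressions are the ones consistent with the score-process derivation and with the Woodbury identity you cite, so do not be troubled that they fail to reproduce the paper's $\bSigma_{12}$ and $\bSigma^*$; your formulas are the correct ones.
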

\begin{proof}
See Appendix.
\end{proof}

Now, we are ready to obtain the asymptotic distributional biases of estimators which are given the following section.

\subsection{The Performance of Bias} 
The asymptotic distributional bias of an estimator $\hbbeta_{1, \tau}^{\ast }$ is defined as
\begin{eqnarray*}
\mathcal{B}\left( \hbbeta_{1,\tau}^{\ast }\right) =\mathbb{E} \underset{%
n\rightarrow \infty }{\lim }\left\{\sqrt{n}\left( \hbbeta_{1,\tau}^{\ast }-%
\bbeta_{1,\tau} \right) \right\}.
\end{eqnarray*}

\begin{theorem}
Under the assumed regularity conditions A1 and A2, the Proposition \ref{prop_vector_dist},  the Theorem~\ref{teo_dist_FM}  and the local alternatives $\left\{K_n\right\}$, the expressions for asymptotic biases for listed estimators are:
\label{bias}
\begin{eqnarray*}
\mathcal{B}\left(\hbbeta_{1,\tau}^{\rm FM}\right) &=&
\boldsymbol{0}\\
\mathcal{B}\left( \hbbeta_{1,\tau}^{\rm SM}\right) &=&%
\boldsymbol{\delta }\\
\mathcal{B}\left( \hbbeta_{1,\tau}^{\rm PT}\right) &=&\bdelta H_{p_{2}+2}\left( \chi _{p_{2},\alpha
}^{2};\Delta \right)  \\
\mathcal{B}\left( \hbbeta_{1,\tau}^{\rm S}\right) &=&\bdelta \left\{ d\mathbb{E}\left\{\chi _{p_{2}+2}^{-2}(\Delta^2) \right\}+\mathbb{H}_{p_{2}+2}\left( d;\Delta \right)\right. \\
&&-\left. d\mathbb{E}\left( \chi _{p_{2}+2}^{-2}(\Delta^2)\textrm{I}\left(\chi _{p_{2}+2}^{2}(\Delta^2)< d\right) \right) \right\}
\end{eqnarray*}%
where $\Delta = \boldsymbol{\kappa}'\left(w^{2}\boldsymbol{D}_{22.1}^{-1}\right)^{-1}\boldsymbol{\kappa}$, $d=p_2-2$ and $\mathbb{H}_{v}\left( x,\Delta \right) $ is the cumulative distribution
function of the non-central chi-squared distribution with non-centrality
parameter $\Delta $ and $v$ degree of freedom, and
\begin{equation*}
\mathbb{E}\left( \chi _{v}^{-2j}\left( \Delta \right) \right)
=\int\nolimits_{0}^{\infty }x^{-2j}d\mathbb{H}_{v}\left( x,\Delta \right) .
\end{equation*}

\end{theorem}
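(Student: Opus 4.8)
The plan is to write each scaled and centred estimator as an explicit function of the pivots $\bvt_1,\bvt_3$ and the Wald statistic $\mathcal{W}$ of Proposition~\ref{prop_vector_dist}, and then to take expectations of the limiting random variables term by term. Since $\sqrt{n}(\FM-\bbeta_{1,\tau})=\bvt_1$ and $\sqrt{n}(\SM-\bbeta_{1,\tau})=\bvt_2$, the first two biases are read off directly from the proposition: the mean $\bzero$ of $\bvt_1$ gives $\mathcal{B}(\FM)=\bzero$, and the mean $\bdelta$ of $\bvt_2$ gives $\mathcal{B}(\SM)=\bdelta$. For the remaining estimators I would substitute the defining formulas to obtain the decompositions
\[
\sqrt{n}(\PT-\bbeta_{1,\tau})=\bvt_1-\bvt_3\,\textrm{I}(\mathcal{W}<\chi^2_{p_2,\alpha}),\qquad
\sqrt{n}(\SS-\bbeta_{1,\tau})=\bvt_1-d\,\bvt_3\,\mathcal{W}^{-1},
\]
the positive-part version carrying the additional correction $-\bvt_3(1-d\mathcal{W}^{-1})\textrm{I}(\mathcal{W}\le d)$. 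In every case the only nontrivial object is an expectation of $\bvt_3$ weighted by a scalar function of $\mathcal{W}$.

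The engine for these expectations is the joint limiting law of $\bvt_3$ and $\mathcal{W}$. Writing $\bY=\sqrt{n}\,\hbbeta_{2,\tau}^{\rm FM}$, Theorem~\ref{teo_dist_FM} together with the local alternatives $K_n$ shows that $\bY$ converges in distribution to $\mathcal{N}(\boldsymbol{\kappa},w^2\bD^{22})$, while the partitioned-regression identity supplies the asymptotic linear link $\bvt_3=-\bD_{11}^{-1}\bD_{12}\bY$; a quick check recovers both the mean $-\bdelta$ and the covariance $\bPhi$ of Proposition~\ref{prop_vector_dist}. Since $\bD^{22}=\bD_{22.1}^{-1}$, the statistic $\mathcal{W}=\bY'(w^2\bD^{22})^{-1}\bY$ then converges to a noncentral $\chi^2_{p_2}(\Delta)$ variable with $\Delta=\boldsymbol{\kappa}'(w^2\bD_{22.1}^{-1})^{-1}\boldsymbol{\kappa}$.

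I would next invoke the Judge--Bock (Stein) identity: if $\bZ\sim\mathcal{N}(\bmu,\bI_{p_2})$ then $\mathbb{E}[\bZ\,g(\bZ'\bZ)]=\bmu\,\mathbb{E}[g(\chi^2_{p_2+2}(\lambda))]$ with $\lambda=\bmu'\bmu$, for every Borel $g$ whose expectation exists. Standardising $\bY=\bSigma^{1/2}\bZ$ with $\bSigma=w^2\bD^{22}$ converts $\mathcal{W}$ into $\bZ'\bZ$ and yields $\mathbb{E}[\bY\,g(\mathcal{W})]=\boldsymbol{\kappa}\,\mathbb{E}[g(\chi^2_{p_2+2}(\Delta))]$, hence
\[
\mathbb{E}[\bvt_3\,g(\mathcal{W})]=-\bD_{11}^{-1}\bD_{12}\boldsymbol{\kappa}\,\mathbb{E}[g(\chi^2_{p_2+2}(\Delta))]=-\bdelta\,\mathbb{E}[g(\chi^2_{p_2+2}(\Delta))].
\]
Choosing $g(t)=\textrm{I}(t<\chi^2_{p_2,\alpha})$ gives $\mathcal{B}(\PT)=\bdelta\,H_{p_2+2}(\chi^2_{p_2,\alpha};\Delta)$; choosing $g(t)=t^{-1}$ produces the leading $d\,\mathbb{E}[\chi^{-2}_{p_2+2}(\Delta)]$ term, and the two pieces $g(t)=\textrm{I}(t\le d)$ and $g(t)=t^{-1}\textrm{I}(t\le d)$ coming from the positive-part correction supply $\mathbb{H}_{p_2+2}(d;\Delta)$ and the truncated inverse moment, assembling the stated three-term Stein expression after collecting the factors of $\bdelta$.

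The main obstacle lies in the second step: one must secure the \emph{joint} weak convergence of $(\bvt_3,\mathcal{W})$ to $(-\bD_{11}^{-1}\bD_{12}\bY,\ \bY'(w^2\bD^{22})^{-1}\bY)$ driven by a single limiting Gaussian $\bY$, so that the indicators and inverse weights act on exactly the vector whose linear image is $\bvt_3$; the marginal statements of Proposition~\ref{prop_vector_dist} do not by themselves deliver this. A subsidiary point is integrability: the inverse moment $\mathbb{E}[\chi^{-2}_{p_2+2}(\Delta)]$ is finite because $p_2+2>2$, which is exactly what the standing condition $d=p_2-2\ge1$ guarantees, and this is what licenses the Judge--Bock identity with $g(t)=t^{-1}$.
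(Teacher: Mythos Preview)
Your proposal is correct and follows essentially the same route as the paper: decompose each centred and scaled estimator as $\bvt_1$ minus a scalar reweighting of $\bvt_3$, then evaluate $\mathbb{E}[\bvt_3\,g(\mathcal{W})]$ via the Judge--Bock identity (stated in the paper as Lemma~\ref{lem_JB}). The paper's proof is terser---it simply cites Judge--Bock at the key step without writing out the standardisation $\bY=\bSigma^{1/2}\bZ$ or the link $\bvt_3=-\bD_{11}^{-1}\bD_{12}\bY$ explicitly---and it does not pause over the joint-convergence or integrability issues you flag; in those respects your version is more careful than the original. Note also that the displayed three-term expression labelled $\mathcal{B}(\SS)$ in the theorem statement is in fact the bias of the positive-part estimator $\PS$ (the paper's own appendix computes $\mathcal{B}(\SS)=d\bdelta\,\mathbb{E}\{\chi^{-2}_{p_2+2}(\Delta)\}$ separately), so your reading of it as the PS bias is the right one.
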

\begin{proof}
See Appendix.
\end{proof}

Now, we define the following asymptotic quadratic bias $\left(\mathcal{QB}\right)$ of an estimator $\hbbeta_{1, \tau}^{\ast }$
by converting them into the quadratic form since the bias expression of all the estimators are not in the scalar form.
\begin{equation}\label{eq:asqubi}
\mathcal{QB}\left(\hbbeta_{1, \tau}^{\ast }\right)=\mathcal{B}\left(\hbbeta_{1, \tau}^{\ast }\right)'\bD_{11.2}\mathcal{B}\left(\hbbeta_{1, \tau}^{\ast }\right)
\end{equation}

Using the definition given in $\eqref{eq:asqubi}$, the asymptotic distributional quadratic bias of the
estimators are presented below.
\begin{eqnarray*}
\mathcal{QB}\left(\hbbeta_{1,\tau}^{\rm FM}\right) &=&
\boldsymbol{0}\\
\mathcal{QB}\left( \hbbeta_{1,\tau}^{\rm SM}\right) &=&
\boldsymbol{\delta}'\bD_{11.2}\boldsymbol{\delta} \\
\mathcal{QB}\left( \hbbeta_{1,\tau}^{\rm PT}\right) &=&\boldsymbol{\delta}'\bD_{11.2}\boldsymbol{\delta} \Big[H_{p_{2}+2}\left( \chi _{p_{2},\alpha
}^{2};\Delta \right)\Big]^2 \\
\mathcal{QB}\left( \hbbeta_{1,\tau}^{\rm S}\right) &=& d^2\boldsymbol{\delta}'\bD_{11.2}\boldsymbol{\delta}\Big[\mathbb{E}\left\{\chi _{p_{2}+2
}^{-2}\left( \Delta\right)\right\}\Big]^2 \\
\mathcal{QB}\left( \PS \right) &=&\bdelta'\bD_{11.2}\bdelta \Big[d\mathbb{E}\left\{\chi _{p_{2}+2}^{-2}(\Delta^2) \right\}+\mathbb{H}_{p_{2}+2}\left( d;\Delta \right) \Big.\\
&&-\Big.d\mathbb{E}\left( \chi _{p_{2}+2}^{-2}(\Delta^2)\textrm{I}\left(\chi _{p_{2}+2}^{2}(\Delta^2)< d\right) \right)\Big]^2
\end{eqnarray*}%

The $\mathcal{QB}$ of $\hbbeta_{1,\tau}^{\rm FM}$ is $\boldsymbol 0$ and the $\mathcal{QB}$ of $\hbbeta_{1,\tau}^{\rm SM}$ is an unbounded function of $\boldsymbol{\delta}'\bD_{11.2}\boldsymbol{\delta}$. The $\mathcal{QB}$ of $\hbbeta_{1,\tau}^{\rm PT}$ starts from $\boldsymbol 0$ at $\Delta=0$, and when $\Delta=0$ increases it to the maximum point and then decreases to zero. For the $\mathcal{QB}$s of $\hbbeta_{1,\tau}^{\rm S}$ and $\hbbeta_{1,\tau}^{\rm PS}$, they similarly start from $\boldsymbol 0$, and increase to a
point, and then decrease towards zero. 


\subsection{The performance of Risk}
The asymptotic distributional risk of an estimator $\hbbeta_{1, \tau}^{\ast }$ is defined as
\begin{equation}
\label{def:risk}
\mathcal{R}\left( \hbbeta_{1,\tau}^{\ast} \right) = {\rm tr}\left( \boldsymbol{W}\bGamma(\hbbeta_{1,\tau}^{\ast}) \right) 
\end{equation}
where $\boldsymbol{W}$ is a positive definite matrix of weights with dimensions of $p_1 \times p_1$, and $\boldsymbol{\Gamma}$ is the asymptotic covariance matrix of an estimator $\hbbeta_{1, \tau}^{\ast }$ defined as
\begin{equation*}
\boldsymbol{\Gamma} \left( \hbbeta_{1,\tau}^{\ast} \right) = \mathbb{E}\left\{\underset{n\rightarrow \infty }{\lim }{n}\left( \hbbeta_{1,\tau}^{\ast} -\bbeta_{1,\tau}\right)\left( \hbbeta_{1,\tau}^{\ast}-\bbeta_{1,\tau}\right) ^{'}\right\}.
\end{equation*}





\begin{theorem}\label{risks}
Under the assumed regularity conditions in A1 and A2, the Proposition \ref{prop_vector_dist},  the Theorem~\ref{teo_dist_FM}  and $\left\{K_n\right\}$, the expressions for asymptotic risks for listed estimators are:
\begin{eqnarray*}
\mathcal{R}\left( \FM\right)
&=&w^2{\rm tr}\left(\bW\bD_{11.2}^{-1}\right)\\
\mathcal{R}\left( \SM \right)
&=&w^2{\rm tr}\left(\bW \bD_{11}^{-1}\right)
+\bdelta' \bW \bdelta\\
\mathcal{R}\left( \PT\right) 
&=&\mathcal{R}\left(\FM \right)+{\rm tr}\left(\bW\bdelta \bdelta ^{'}\bPhi^{-1}\bSigma_{21} \right)\left[\mathbb{H}_{p_{2}+4}\left( \chi _{p_{2},\alpha}^{2};\Delta \right)\right.\\
&&+\left. \mathbb{H}_{p_{2}+2}\left( \chi _{p_{2},\alpha}^{2};\Delta \right)\right]+{\rm tr}\left(\bW\bPhi\right) \mathbb{H}_{p_{2}+2}\left(
\chi^2_{p_2,\alpha};\Delta \right)\\
&& +\bdelta'\bW \bdelta\mathbb{H}_{p_{2}+4}\left( \chi^2_{p_2,\alpha};\Delta \right)
-2{\rm tr}\left(\bW\bSigma_{21}\right)\mathbb{H}_{p_{2}+2}\left(
\chi _{p_{2},\alpha }^{2};\Delta \right)\\
\mathcal{R}\left( \SS \right) 
&=&\mathcal{R}\left(
\FM\right) -2d{\rm tr}\left(\bW \bSigma_{21}\right) \mathbb{E}\left\{ \chi _{p_{2}+2
}^{-2}\left( \Delta \right) \right\}\\
&&-2d {\rm tr}\left(\bW\bdelta\bdelta^{'}\bPhi\bSigma_{21}\right)\mathbb{E}\left\{\chi _{p_{2}+4
}^{-2}\left( \Delta\right)\right\} \\
&&+d^2{\rm tr}\left(\bW\bPhi\right)\mathbb{H}_{p_{2}+2}\left(
\chi _{p_{2},\alpha }^{2};\Delta \right)+d^2 \bdelta \bW\bdelta \mathbb{H}_{p_{2}+4}\left( \chi _{p_{2},\alpha
}^{2}\left(\Delta \right)\right) \\
\mathcal{R}\left( \PS \right)  
&=&\mathcal{R}\left(
\SS \right)-2{\rm tr}\left(\bW\bSigma_{21}\right)\mathbb{E}\left(1-d\chi _{p_{2}+2 }^{-2}\left( \Delta \right)\right)\textrm{I}\left( \chi _{p_{2}+2 }^{2}\left( \Delta \right)<d\right)  \\ 
&&+2{\rm tr}\left(\bW\bdelta\bdelta^{'}\bPhi^{-1}\bSigma_{21}\right)\mathbb{E}\left(1-d\chi _{p_{2}+4 }^{-2}\left( \Delta \right)\right)\textrm{I}\left(\chi_{p_{2}+4}^{2}\left( \Delta \right)<d\right) \\
&&+2{\rm tr}\left(\bW\bdelta\bdelta^{'}\bPhi^{-1}\bSigma_{21}\right)\mathbb{E}\left(1-d\chi _{p_{2}+2 }^{-2}\left( \Delta \right)\right)\textrm{I}\left(\chi_{p_{2}+2}^{2}\left( \Delta \right)<d\right) \\
&&-d^2{\rm tr}\left(\bW\bPhi\right) \mathbb{E}\left\{\chi _{p_{2}+2}^{-4}\left(\Delta\right)\right\}\textrm{I}\left(\chi _{p_{2}+2}^{2}\left(\Delta\right)\leq d\right)\no \\
&&-d^2\bdelta'\bW\bdelta\mathbb{E}\left\{\chi _{p_{2}+2}^{-4}\left(\Delta\right)\right\}\textrm{I}\left(\chi _{p_{2}+2}^{2}\left(\Delta\right)\leq d\right)\no \\
&&+{\rm tr}\left(\bW\bPhi\right) \mathbb{H}_{p_{2}+2}\left(
d;\Delta \right)+\bdelta'\bW\bdelta\mathbb{H}_{p_{2}+4}\left(
d;\Delta \right)
\end{eqnarray*}%
\end{theorem}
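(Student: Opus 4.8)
The plan is to reduce every risk to the evaluation of the asymptotic mean-squared-error matrix $\bGamma(\cdot)$ and then to apply the weighted trace in \eqref{def:risk}, keeping the scaled deviations $\bvt_1$, $\bvt_2$, $\bvt_3$ of Proposition~\ref{prop_vector_dist} as the only random inputs. The one probabilistic fact I would isolate first is that, under $\{K_n\}$, the Wald statistic $\mathcal{W}_n$ converges in law to the quadratic form $w^{-2}\bz'\bD_{22.1}\bz$, where $\bz$ is the $p_2$-dimensional Gaussian limit of $\sqrt{n}\,\hbbeta_{2,\tau}^{\rm FM}\sim\mathcal{N}(\boldsymbol{\kappa},w^2\bD_{22.1}^{-1})$ and $\bvt_3=-\bD_{11}^{-1}\bD_{12}\bz$ in the limit; consequently $\mathcal{W}_n\to\chi^2_{p_2}(\Delta)$ with $\Delta=\boldsymbol{\kappa}'(w^2\bD_{22.1}^{-1})^{-1}\boldsymbol{\kappa}$, while $\bdelta=\bD_{11}^{-1}\bD_{12}\boldsymbol{\kappa}$ and $\bPhi=w^2\bD_{11}^{-1}\bD_{12}\bD_{22.1}^{-1}\bD_{21}\bD_{11}^{-1}$ appear as images of $\boldsymbol{\kappa}$ and $w^2\bD_{22.1}^{-1}$ under this linear map. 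Throughout I assume the uniform integrability needed to pass from convergence in distribution to convergence of the relevant second moments, so that $\bGamma$ can be read off from the limiting laws.

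First I would dispose of the two baseline cases, which need no Stein-type machinery. For the full model $\sqrt{n}(\FM-\bbeta_{1,\tau})=\bvt_1\to\mathcal{N}(\bzero,w^2\bD_{11.2}^{-1})$ by Theorem~\ref{teo_dist_FM}, so $\bGamma(\FM)=w^2\bD_{11.2}^{-1}$ and the weighted trace gives $w^2\,{\rm tr}(\bW\bD_{11.2}^{-1})$. For the sub-model $\sqrt{n}(\SM-\bbeta_{1,\tau})=\bvt_2\to\mathcal{N}(\bdelta,w^2\bD_{11}^{-1})$; since $\bGamma$ is a second moment about the \emph{true} value it absorbs the bias, giving $\bGamma(\SM)=w^2\bD_{11}^{-1}+\bdelta\bdelta'$ and hence $w^2\,{\rm tr}(\bW\bD_{11}^{-1})+\bdelta'\bW\bdelta$.

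For the pretest, Stein, and positive-part estimators I would write each scaled deviation in the common form $\bvt_1-\bvt_3\,\psi(\mathcal{W}_n)$, with $\psi(t)=\mathrm{I}(t<\chi^2_{p_2,\alpha})$ for $\PT$, $\psi(t)=d\,t^{-1}$ for $\SS$, and the truncated combination for $\PS$. Expanding the outer product,
\[
\bGamma=\mathbb{E}[\bvt_1\bvt_1']-2\,\mathbb{E}[\bvt_1\bvt_3'\psi(\mathcal{W}_n)]+\mathbb{E}[\bvt_3\bvt_3'\,\psi(\mathcal{W}_n)^2],
\]
so the task is the cross term and the quadratic term. The quadratic term I would evaluate by the Judge--Bock / Stein-type identities $\mathbb{E}[\bvt_3\,g(\mathcal{W}_n)]=-\bdelta\,\mathbb{E}\{g(\chi^2_{p_2+2}(\Delta))\}$ and $\mathbb{E}[\bvt_3\bvt_3'\,g(\mathcal{W}_n)]=\bPhi\,\mathbb{E}\{g(\chi^2_{p_2+2}(\Delta))\}+\bdelta\bdelta'\,\mathbb{E}\{g(\chi^2_{p_2+4}(\Delta))\}$, which follow from $\bvt_3=-\bD_{11}^{-1}\bD_{12}\bz$ and the corresponding identities for the nonsingular vector $\bz$. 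The cross term I would handle by conditioning, $\mathbb{E}[\bvt_1\mid\bvt_3]=\bSigma_{12}\bPhi^{-1}(\bvt_3+\bdelta)$, which reduces $\mathbb{E}[\bvt_1\bvt_3'\psi]$ to the same $\bvt_3$-moments and produces a term in $\bSigma_{21}$ at degrees $p_2+2$ and a term in $\bdelta\bdelta'\bPhi^{-1}\bSigma_{21}$ from the difference between the $p_2+2$ and $p_2+4$ shifts. Choosing $g=\psi$ and $g=\psi^2$ then specializes everything: for $\PT$, $\psi=\psi^2$ is an indicator and $\mathbb{E}\{g(\chi^2_{p_2+k}(\Delta))\}=\mathbb{H}_{p_2+k}(\chi^2_{p_2,\alpha};\Delta)$, giving the cdf terms; for $\SS$, $g=d\,t^{-1}$ and $g=d^2 t^{-2}$ give the inverse moments $\mathbb{E}\{\chi^{-2}_{p_2+k}(\Delta)\}$ and $\mathbb{E}\{\chi^{-4}_{p_2+k}(\Delta)\}$. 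Taking the weighted trace and collecting terms yields the listed $\mathcal{R}(\PT)$ and $\mathcal{R}(\SS)$.

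Finally, for $\PS$ I would use $\PS=\SS-(\FM-\SM)(1-d\mathcal{W}_n^{-1})\mathrm{I}(\mathcal{W}_n\le d)$, so $\bGamma(\PS)$ equals $\bGamma(\SS)$ plus the cross and quadratic contributions of this additive correction; applying the same identities with $g$ now carrying the factor $\mathrm{I}(\cdot\le d)$ produces the truncated expectations $\mathbb{E}\{\,\cdot\,\mathrm{I}(\chi^2_{p_2+k}(\Delta)\le d)\}$ recorded in $\mathcal{R}(\PS)$. The main obstacle is one of organized bookkeeping in the cross term: one must track, for each application of the Stein-type identity, which block ($\bPhi$, $\bSigma_{21}$, or $\bdelta\bdelta'$) and which degrees of freedom ($p_2+2$ versus $p_2+4$) are generated, and match the scalar weights $d$ and $d^2$ coming from $\psi$ versus $\psi^2$. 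The positive-part case is the most delicate, because the indicator $\mathrm{I}(\mathcal{W}_n\le d)$ does not factor out of the Gaussian expectation and so forces the truncated inverse-moment terms; a secondary care point is checking that the conditioning identity is consistent with the covariance blocks of Proposition~\ref{prop_vector_dist}, since every cross term inherits its $\bSigma_{21}$ factor through exactly this step.
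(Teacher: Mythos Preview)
Your plan is correct and follows essentially the same route as the paper's proof: expand the scaled deviation as $\bvt_1-\bvt_3\,\psi(\mathcal{W}_n)$, handle the $\bvt_3\bvt_3'$ block via the Judge--Bock identities (the paper's Lemma~\ref{lem_JB}), and reduce the cross term $\bvt_3\bvt_1'$ by the conditional-mean formula $\mathbb{E}[\bvt_1\mid\bvt_3]=\bSigma_{12}\bPhi^{-1}(\bvt_3+\bdelta)$ before applying the same identities; for $\PS$ the paper likewise writes it as $\SS$ plus the truncated correction and expands. The only substantive addition in your sketch is the explicit mention of uniform integrability and the representation $\bvt_3=-\bD_{11}^{-1}\bD_{12}\bz$, which the paper leaves implicit.
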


Noting that if $\bD_{12}= \boldsymbol{0}$, then all the risks reduce to common value  \linebreak $\omega^2{\rm tr}\left(\bW \bD_{11}\right)$ for all $\bW$. For  $\bD_{12}\neq 0$, the risk of  $\FM$ remains constant while the risk of $\SM$ is an bounded function of $\Delta$ since $\Delta \in [0,\infty]$. The risk of $\PT$ increases as $\Delta$ moves away from zero, achieves it maximum and then decreases towards the risk of the full model estimator. Thus, it is a bounded function of $\Delta$. The risk of $\hbbeta_{1,\tau}^{\rm FM}$ is smaller than the risk of $\hbbeta_{1,\tau}^{\rm PT}$ for some small values of $\Delta$ and opposite conclusions holds for rest of the parameter space. It can be seen that $\mathcal{R}\left(\PS\right)\leq{\mathcal{R}\left(
\hbbeta_{1,\tau}^{\rm S}\right)}\leq{\mathcal{R}\left(\FM\right)},$ 
strictly inequality holds for small values of $\Delta$. Thus, positive shrinkage is superior to the shrinkage estimator. However, both shrinkage estimators outperform the full model estimator in the entire parameter space induced by $\Delta$. On the other hand, the pretest estimator performs better than the shrinkage estimators when $\Delta$ takes small values and outside this interval the opposite conclusion holds.

\section{Simulations}
\label{sim}
We conduct Monte-Carlo simulation experiments to study the performances of the proposed estimators under various practical settings. In order to generate the response variables, we use
\begin{equation*}
y_i=\bx_i'\bbeta+\sigma\varepsilon_i,\ i=1,\dots,n,
\end{equation*}%
where $\bx_i$'s are standard normal. The correlation between the $j$th and $k$th components of $\bx$ equals to $0.5^{|j-k|}$ and also $\varepsilon_i$'s follow i.i.d.

\subsection{Asymptotic Investigations}
 We consider that the regression coefficients are set
$\bbeta=\left( \bbeta_{1}',\bbeta_{2}'\right)' =\left( \mathbf{1}'_{p_1},\mathbf{0}_{p_2}'\right)'$, where $\mathbf{1}_{p_1}$ and $\mathbf{0}_{p_2}$ mean the vectors of 1 and 0 with dimensions $p_1$ and $p_2$, respectively. In order to investigate the behavior of the estimators, we define $\Delta^{\ast}=\left\Vert \bbeta -\bbeta_{0}\right\Vert $, where $\bbeta_{0}=\left( \bold{1}'_{p_1},\mathbf{0}_{p_2}'\right)'$ and $\left\Vert \cdot \right\Vert $ is the Euclidean norm. If $\Delta^{\ast}=0$, then it means that we will have $\bbeta=\left( \mathbf{1}_{p_1}',\mathbf{0}_{p_2}'\right)'$ to generate the response while we will have $\bbeta=(\mathbf{1}'_{p_1},2, \mathbf{0}_{p_{2}-1}')'$ when $\Delta^{\ast}>0$, say $\Delta^{\ast}=2$. When we increase the number of $\Delta^{\ast}$, it indicates the degree of the violation of null hypothesis. Also, we consider that $n=60$, $p_1=5$, $p_2=5,10$ and $\alpha=0.01,0.05,0.10,0.25$. Furthermore, we consider both $\sigma=1,3$ and errors are only taken from standard normal distribution. In this case we investigate the performance of suggested estimators 
for different values of $\Delta^{\ast}$.

The performance of an estimator $\hbbeta_{\tau}^{\ast}$ was evaluated by
using the model error (ME) criterion which is defined by
\begin{equation*}
\textnormal{ME}\left(\hbbeta_{\tau}^{\ast}\right) =\left( \hbbeta_{\tau}^{\ast}-\bbeta\right)'\left( \hbbeta_{\tau}^{\ast}-\bbeta\right)
\end{equation*}

\begin{figure}
\centering
   \begin{subfigure}[a]{.9\textwidth}
   \includegraphics[height=8cm,width=14cm]{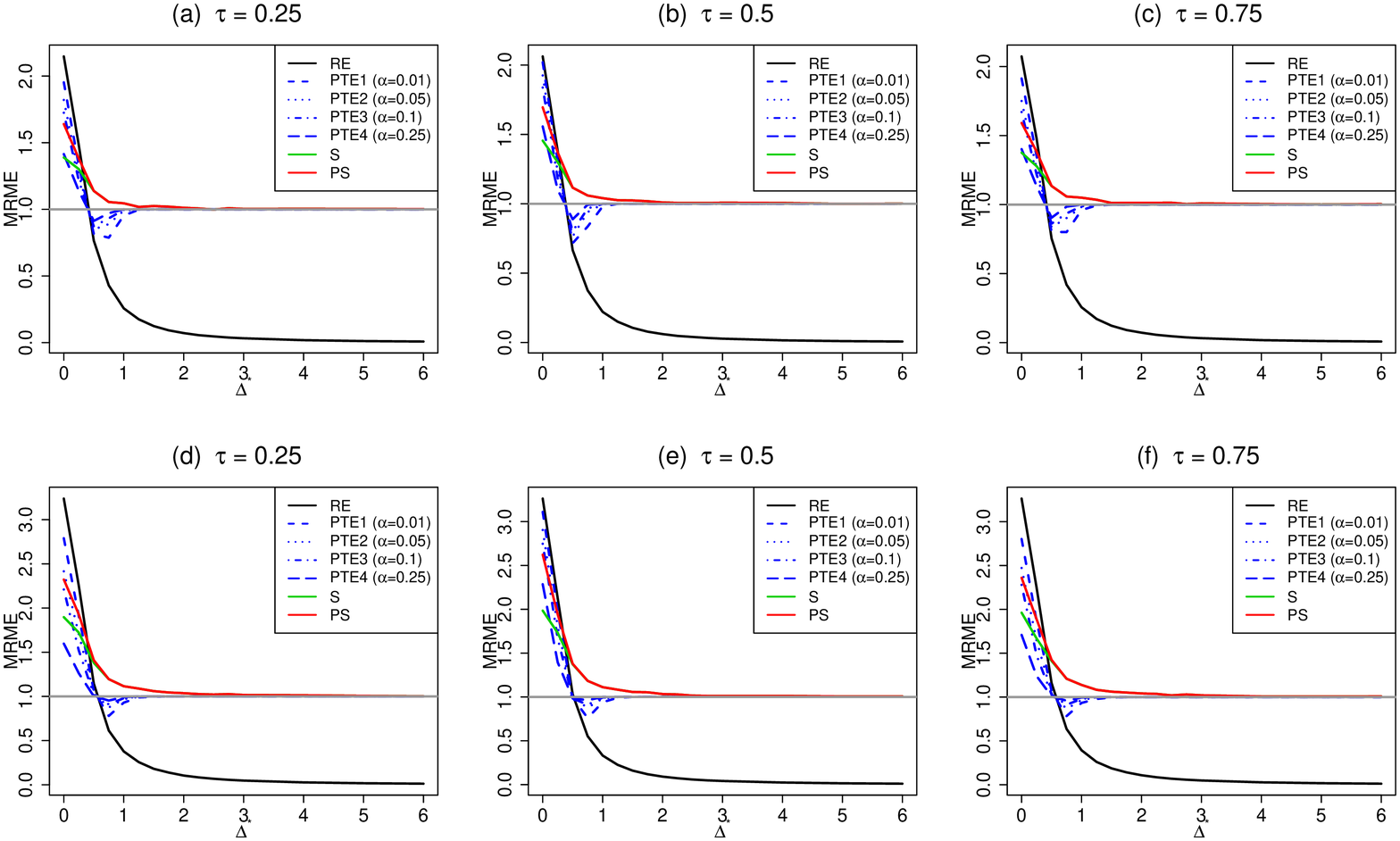}
   \caption{}
   \label{fig:Ng1} 
\end{subfigure}

\begin{subfigure}[b]{.9\textwidth}
   \includegraphics[height=8cm,width=14cm]{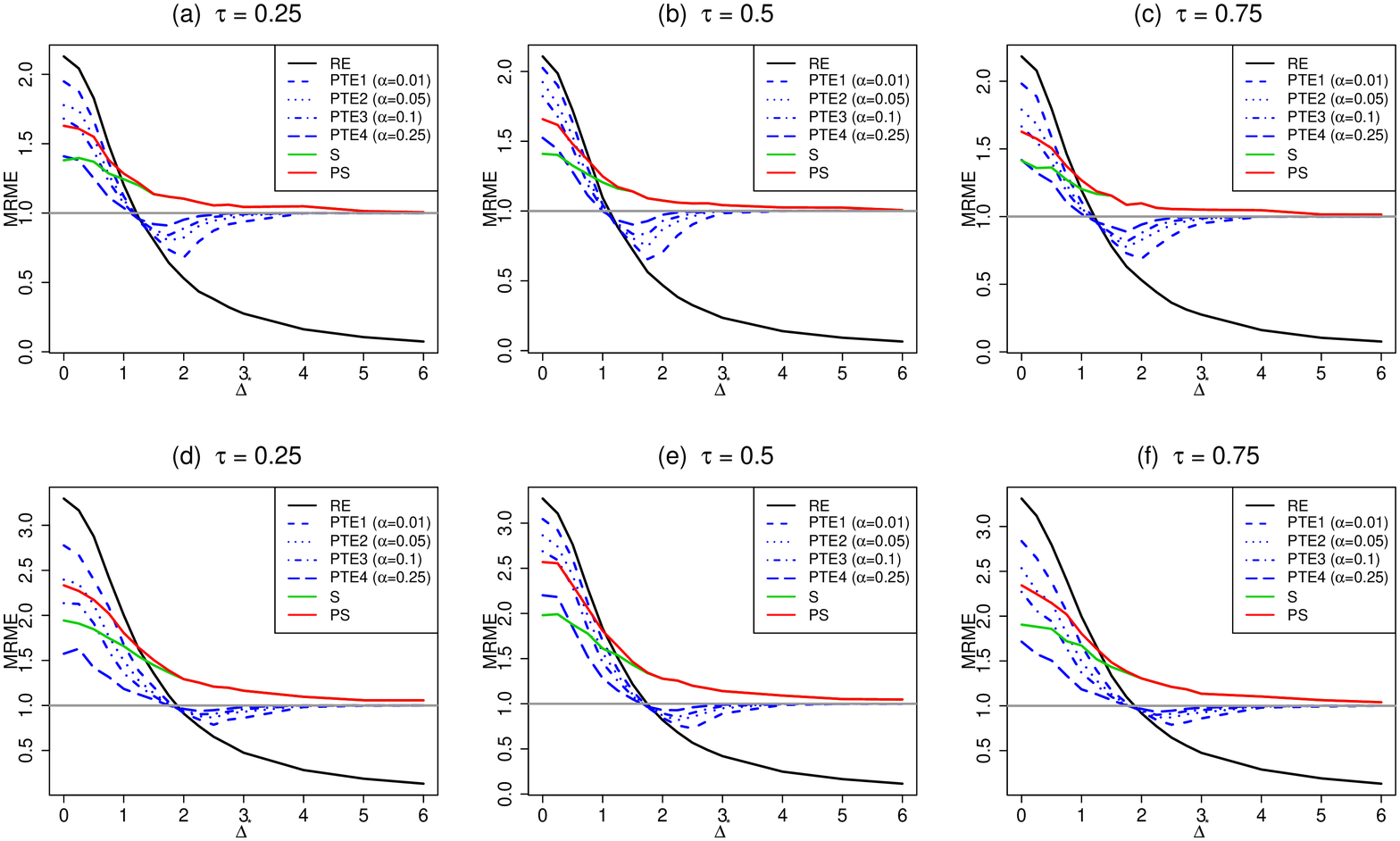}
     \caption{}
   \label{fig:Ng2}
\end{subfigure}
     \caption{$\sigma$ is 1 and 3 in the panels (A) and (B), respectively. Also, $p_1=5$ and $p_{\tau}=5$  in (a), (b) and (c), and $p_1=5$ and $p_{\tau}=10$ in (d), (e) and (f).}
\end{figure}

For each Monte Carlo dataset with 2500 replication, we compare the proposed $\hbbeta_{\tau}^{\rm FM}$ procedure using the relative median model error (MRME) which is defined as
\begin{equation}\label{eq:RME}
\textnormal{MRME}(\hbbeta_{\tau}^\ast;\hbbeta_{\tau}^{\rm FM}) = \frac{\textnormal{ME}(\hbbeta_{\tau}^{\rm FM})}{\textnormal{ME}(\hbbeta_{\tau}^{\ast})}
\end{equation}
If MRME of an estimator is larger than one, it is superior to $\hbbeta_{\tau}^{\rm FM}$.

\subsection{Performance Comparisons}
In this section, we illustrate two example to both investigate suggested estimators and compare their performance with listed quantile penalty estimations. Hence, we simulated data which contains a training dataset, validation set and an independent test set. In this section, the co-variates are scaled to have mean zero and unit variance. We fitted the models only using the training data and the tuning parameters were selected using the validation data. Finally, we computed the predictive mean absolute deviation (PMAD) criterion which is defined by
\begin{equation*}
\rm PMAD(\hbbeta_{\tau}^{\ast}) = \frac{1}{n_{test}}\sum_{i=1}^{n_{test}}\left | \by_{test}-\bX_{test}\hbbeta_{\tau}^{\ast} \right |.
\end{equation*}
We also use the notation $\cdot/\cdot/\cdot$ to describe the number of observations in the training, validation and test set respectively, e.g. $50/50/400$. Here are the details of the two examples:
\begin{itemize}
\item[1-] Each data set consists of $50/50/200$ observations. $\bbeta$ is specified by $\bbeta^{\top} = (3, 1.5, 0, 0, 2, 0, 0, 0)$ and $\sigma=3$. Also, we consider $\bX \sim N(0,\boldsymbol{\Sigma})$, where $\Sigma_{ij}=0.5^{|i-j|}$.

\item[2-] We keep all values the same as in (1) except
$\bbeta^{\top} = (3, 1.5, 0, 0, 2, \bold{0}_{10})$.
\end{itemize}

Table 1. shows the MAD values of pretest and shrinkage estimators for different distributions of $\varepsilon_i$ with given parameters at $25^{th}$, $50^{th}$ and $75^{th}$ quantiles, where $p_2=5$.

\begin{table}[ht]
\centering
\begin{adjustbox}{width=1\textwidth}
\begin{tabular}{rrcccccc}
  \toprule
$\tau$& &Normal& $\chi^2_5$ & $t_2$ & Laplace & Log-normal & Skew-Normal \\  
  \midrule
0.25&FM & 0.629(0.018) & 1.348(0.060) & 0.922(0.032) & 0.781(0.033) & 0.321(0.016) & 0.608(0.022) \\ 
&  SM & 0.176(0.009) & 0.361(0.021) & 0.246(0.015) & 0.230(0.010) & 0.097(0.007) & 0.162(0.008) \\ 
&  PT1 & 0.216(0.022) & 0.394(0.064) & 0.264(0.030) & 0.242(0.035) & 0.101(0.015) & 0.173(0.017) \\ 
&  PT2 & 0.225(0.024) & 0.430(0.075) & 0.292(0.039) & 0.267(0.039) & 0.105(0.017) & 0.177(0.025) \\ 
&  PT3 & 0.237(0.026) & 0.494(0.080) & 0.339(0.040) & 0.287(0.042) & 0.114(0.019) & 0.182(0.025) \\ 
&  PT4 & 0.268(0.028) & 0.497(0.082) & 0.560(0.043) & 0.360(0.045) & 0.124(0.020) & 0.206(0.030) \\ 
&  PS & 0.328(0.020) & 0.751(0.061) & 0.569(0.031) & 0.477(0.033) & 0.156(0.015) & 0.311(0.020) \\ 
\cmidrule(lr){3-8}
&  Ridge & 0.407(0.010) & 0.628(0.015) & 0.516(0.015) & 0.473(0.013) & 0.236(0.011) & 0.423(0.012) \\ 
&  Lasso & 0.261(0.010) & 0.452(0.015) & 0.371(0.015) & 0.320(0.013) & 0.135(0.008) & 0.254(0.011) \\ 
&  ENET & 0.256(0.010) & 0.433(0.015) & 0.354(0.015) & 0.293(0.013) & 0.133(0.008) & 0.247(0.011) \\ 
   \hline
0.5&  FM & 0.553(0.019) & 1.544(0.058) & 0.709(0.027) & 0.619(0.022) & 0.557(0.024) & 0.557(0.018) \\ 
&  SM & 0.160(0.009) & 0.496(0.026) & 0.199(0.011) & 0.141(0.009) & 0.153(0.01) & 0.160(0.008) \\ 
&  PT1 & 0.174(0.017) & 0.527(0.051) & 0.202(0.018) & 0.165(0.025) & 0.160(0.018) & 0.160(0.014) \\ 
&  PT2 & 0.176(0.018) & 0.529(0.059) & 0.203(0.024) & 0.169(0.027) & 0.176(0.019) & 0.174(0.022) \\ 
&  PT3 & 0.184(0.023) & 0.560(0.065) & 0.203(0.029) & 0.172(0.032) & 0.193(0.022) & 0.179(0.024) \\ 
&  PT4 & 0.256(0.028) & 0.631(0.074) & 0.259(0.036) & 0.202(0.035) & 0.222(0.028) & 0.215(0.026) \\ 
&  PS & 0.303(0.019) & 0.929(0.055) & 0.332(0.025) & 0.325(0.024) & 0.321(0.019) & 0.255(0.019) \\ 
\cmidrule(lr){3-8}
&  Ridge & 0.373(0.011) & 0.695(0.013) & 0.416(0.013) & 0.400(0.012) & 0.362(0.012) & 0.387(0.009) \\ 
& Lasso & 0.233(0.010) & 0.505(0.016) & 0.280(0.014) & 0.225(0.013) & 0.207(0.013) & 0.230(0.009) \\ 
&  ENET & 0.225(0.009) & 0.494(0.017) & 0.260(0.013) & 0.212(0.013) & 0.203(0.012) & 0.220(0.009) \\  
  \hline
0.75&  FM & 0.574(0.022) & 2.186(0.080) & 0.916(0.037) & 0.694(0.029) & 1.033(0.048) & 0.607(0.022) \\ 
&  SM & 0.174(0.009) & 0.656(0.032) & 0.250(0.015) & 0.201(0.012) & 0.314(0.018) & 0.159(0.009) \\ 
&  PT1 & 0.191(0.027) & 0.706(0.094) & 0.303(0.043) & 0.212(0.028) & 0.353(0.055) & 0.162(0.021) \\ 
&  PT2 & 0.206(0.029) & 0.777(0.105) & 0.330(0.048) & 0.224(0.032) & 0.534(0.057) & 0.175(0.025) \\ 
&  PT3 & 0.224(0.030) & 1.063(0.113) & 0.350(0.050) & 0.243(0.036) & 0.563(0.059) & 0.194(0.027) \\ 
&  PT4 & 0.262(0.031) & 1.756(0.111) & 0.478(0.053) & 0.513(0.040) & 0.792(0.056) & 0.214(0.031) \\ 
&  PS & 0.328(0.024) & 1.437(0.083) & 0.553(0.040) & 0.450(0.028) & 0.744(0.045) & 0.320(0.023) \\ 
\cmidrule(lr){3-8}
&  Ridge & 0.365(0.011) & 0.797(0.009) & 0.510(0.016) & 0.458(0.012) & 0.562(0.014) & 0.378(0.012) \\ 
&  Lasso & 0.248(0.011) & 0.627(0.016) & 0.356(0.016) & 0.298(0.013) & 0.404(0.016) & 0.236(0.009) \\ 
&  ENET & 0.240(0.011) & 0.603(0.017) & 0.331(0.016) & 0.292(0.013) & 0.391(0.016) & 0.217(0.009) \\
\hline
Mean&  OLS & 0.430(0.043) & 1.323(0.132) & 0.901(0.090) & 0.593(0.059) & 0.804(0.080) & 0.422(0.042) \\ 

\bottomrule
\end{tabular}
\end{adjustbox}
\caption{The simulated MAD values of estimators for Example 1. The values in parenthesis are present the standard errors of each methods
\label{Tab:Ex1}}
\end{table}

Furthermore, we consider that the errors follow one of the following distributions:
\begin{itemize}
\item[(i)] Standard normal distribution, 
\item[(ii)] Chi-square distribution with five degrees of freedom $(\chi^2_{5})$,
\item[(iii)] Student t distribution with two degrees of freedom $(t_2)$,
\item[(iv)] Laplace distribution with location parameter equals zero and scale parameter equals one, 
\item[(v)] Log-normal distribution with location parameter equals zero and scale parameter equals one and also
\item[(vi)] The random sample from the skew-normal distribution.
\end{itemize}

\begin{table}[ht]
\centering
\begin{adjustbox}{width=1\textwidth}
\begin{tabular}{rrcccccc}
  \toprule
$\tau$& &Normal& $\chi^2_5$ & $t_2$ & Laplace & Log-normal & Skew-Normal \\   
  \midrule
0.25&FM & 0.679(0.017) & 1.553(0.046) & 1.127(0.032) & 0.814(0.031) & 0.454(0.016) & 0.655(0.017) \\ 
&  SM & 0.099(0.004) & 0.219(0.011) & 0.152(0.009) & 0.128(0.007) & 0.049(0.004) & 0.100(0.004) \\ 
&  PT1 & 0.109(0.031) & 0.236(0.068) & 0.200(0.053) & 0.160(0.042) & 0.050(0.014) & 0.122(0.030) \\ 
&  PT2 & 0.120(0.033) & 0.271(0.077) & 0.240(0.055) & 0.195(0.046) & 0.057(0.019) & 0.125(0.033) \\ 
&  PT3 & 0.138(0.034) & 0.286(0.081) & 0.843(0.058) & 0.262(0.048) & 0.057(0.022) & 0.139(0.034) \\ 
&  PT4 & 0.546(0.034) & 0.342(0.086) & 0.996(0.056) & 0.655(0.048) & 0.060(0.024) & 0.506(0.035) \\ 
&  PS & 0.285(0.020) & 0.502(0.049) & 0.570(0.033) & 0.409(0.029) & 0.085(0.014) & 0.279(0.020) \\ 
\cmidrule(lr){3-8}
& Ridge & 0.399(0.005) & 0.433(0.002) & 0.433(0.003) & 0.432(0.005) & 0.300(0.008) & 0.398(0.005) \\ 
&  Lasso & 0.182(0.007) & 0.316(0.008) & 0.245(0.008) & 0.215(0.007) & 0.103(0.006) & 0.176(0.007) \\ 
&  ENET & 0.180(0.007) & 0.300(0.008) & 0.235(0.008) & 0.212(0.007) & 0.103(0.006) & 0.176(0.007) \\ 
   \hline
0.5&  FM & 0.616(0.016) & 1.812(0.052) & 0.787(0.029) & 0.717(0.025) & 0.651(0.023) & 0.603(0.017) \\ 
&  SM & 0.094(0.006) & 0.272(0.015) & 0.108(0.007) & 0.090(0.006) & 0.088(0.006) & 0.090(0.004) \\ 
&  PT1 & 0.100(0.020) & 0.283(0.057) & 0.110(0.024) & 0.105(0.028) & 0.095(0.026) & 0.092(0.018) \\ 
&  PT2 & 0.102(0.024) & 0.283(0.063) & 0.117(0.034) & 0.105(0.028) & 0.098(0.032) & 0.095(0.023) \\ 
&  PT3 & 0.104(0.025) & 0.285(0.069) & 0.117(0.039) & 0.107(0.030) & 0.100(0.033) & 0.100(0.027) \\ 
&  PT4 & 0.119(0.028) & 0.334(0.082) & 0.126(0.044) & 0.113(0.034) & 0.115(0.035) & 0.104(0.029) \\ 
&  PS & 0.149(0.016) & 0.402(0.048) & 0.177(0.024) & 0.138(0.020) & 0.174(0.021) & 0.132(0.017) \\ 
\cmidrule(lr){3-8}
&  Ridge & 0.386(0.005) & 0.433(0.001) & 0.414(0.005) & 0.400(0.006) & 0.377(0.006) & 0.370(0.005) \\ 
&  Lasso & 0.175(0.007) & 0.358(0.008) & 0.200(0.007) & 0.192(0.007) & 0.167(0.007) & 0.164(0.006) \\ 
&  ENET & 0.168(0.007) & 0.349(0.008) & 0.195(0.007) & 0.191(0.007) & 0.164(0.007) & 0.161(0.006) \\ 
  \hline
0.75&  FM & 0.640(0.018) & 2.362(0.064) & 1.079(0.033) & 0.847(0.028) & 1.296(0.044) & 0.679(0.022) \\ 
&  SM & 0.105(0.006) & 0.366(0.017) & 0.131(0.008) & 0.119(0.006) & 0.198(0.010) & 0.099(0.004) \\ 
&  PT1 & 0.122(0.023) & 0.585(0.121) & 0.194(0.054) & 0.163(0.039) & 0.848(0.065) & 0.113(0.031) \\ 
&  PT2 & 0.139(0.029) & 1.890(0.123) & 0.239(0.056) & 0.197(0.045) & 0.970(0.064) & 0.140(0.035) \\ 
&  PT3 & 0.165(0.032) & 2.033(0.119) & 0.464(0.056) & 0.224(0.046) & 1.143(0.060) & 0.147(0.035) \\ 
&  PT4 & 0.438(0.033) & 2.152(0.113) & 0.917(0.056) & 0.677(0.046) & 1.215(0.054) & 0.395(0.036) \\ 
&  PS & 0.304(0.018) & 1.436(0.072) & 0.475(0.036) & 0.426(0.027) & 0.818(0.037) & 0.271(0.023) \\ 
\cmidrule(lr){3-8}
&  Ridge & 0.396(0.006) & 0.433(0.000) & 0.433(0.004) & 0.430(0.005) & 0.433(0.002) & 0.400(0.005) \\ 
&  Lasso & 0.178(0.007) & 0.416(0.008) & 0.238(0.009) & 0.216(0.009) & 0.279(0.009) & 0.184(0.007) \\ 
&  ENET & 0.173(0.007) & 0.406(0.008) & 0.233(0.009) & 0.213(0.009) & 0.277(0.009) & 0.182(0.007) \\ 
  \hline
Mean&  OLS & 0.519(0.052) & 1.556(0.156) & 1.177(0.118) & 0.671(0.067) & 0.927(0.093) & 0.489(0.049) \\ 

\bottomrule
\end{tabular}
\end{adjustbox}
\caption{The simulated MAD values of estimators for Example 2. The values in parenthesis are present the standard errors of each methods.}
\label{Tab:Ex2}
\end{table}

We report the result of examples in Tables \ref{Tab:Ex1} and \ref{Tab:Ex2} as follows:
\begin{itemize}
\item[(i)] For $\tau=0.25$; PT1 outperforms the shrinkage estimators. \item[(ii)] For $\tau=0.5$, PT1 more efficient than other estimators except $\chi^2_{5}$ distribution also when $p_2$ value increases, all pretest estimators give better results than shrinkage estimators.
\item[(iii)] For $\tau=0.75$, ENET gives a better result even though the performance of PT1  expected to increase when $p_2$ increased.
\item[(iv)] For $\tau=0.25, 0.5, 0.75$; in $\chi^2_{5}$ distribution, the MAD values of suggested estimators are larger than the MAD values of listed distributions.
\end{itemize}

\clearpage
\section{Real Data Application}
\label{RAA}

\subsection{Prostate Data}
Prostate data came from the study of \cite{Prostate data} about correlation between the level of prostate specific antigen (PSA), and a number of clinical measures in men who were about to receive radical prostatectomy. The data consist of 97 measurements on the following variables of in Table \ref{Tab:variables:prostate}: 

\begin{table}[!htbp]
\small
\centering
\begin{tabular}{ll}
\toprule
\textbf{Variables} & \textbf{Descriptions} \\
\midrule

\textbf{Dependent Variable} &\\
lpsa & Log of prostate specific antigen (PSA) \\
\midrule

\textbf{Covariates}   \\
lcavol   & Log cancer volume  \\
lweight  & Log prostate weight \\
age      &  Age in years\\
lbph     & Log of benign prostatic hyperplasia amount  \\
svi      & Seminal vesicle invasion  \\
lcp      & Log of capsular penetration  \\
gleason  &  Gleason score\\
pgg45    & Percent of Gleason scores 4 or 5 \\
\bottomrule
\end{tabular}
\caption{Lists and Descriptions of Variables for Prostate data set
\label{Tab:variables:prostate}}
\end{table}

\subsection{Barro Data}
The Barro data came from the study of \cite{Barro data} about distribution of education attainment and human capital by genders and by 5-year age intervals in 138 countries from 1965 to 1985. The \textit{quantreg} version of the Barro Growth Data used in \cite{Barro-Growth}. This is a regression data set consisting of 161 observations on determinants of cross country GDP growth rates. There are 13 covariates and a description of  the variables in the Barro dataset is given in Table \ref{Tab:variables:Barro}. The goal is to study the effect of the covariates on GDP.

\begin{table}[!htbp]
\small
\centering
\begin{tabular}{ll}
\toprule
\textbf{Variables} & \textbf{Descriptions} \\
\midrule

\textbf{Dependent Variable} &\\
GDP & Annual Change Per Capita \\
\midrule

\textbf{Covariates}   \\
lgdp2   & Initial 
Per Capita GDP  \\
mse2  & Male Secondary Education \\
fse2      &  Female Secondary Education\\
fhe2     & Female Higher Education  \\
mhe2      & Male Higher Education  \\
lexp2      & Life Expectancy  \\
lintr2  &  Human Capital\\
gedy2    & Education/GDP\\
Iy2      & Investment/GDP  \\
gcony2  &  Public Consumption/GDP\\
lblakp2    & Black Market Premium\\
pol2  &  Political Instability\\
ttrad2    & Growth Rate Terms Trade\\
\bottomrule
\end{tabular}
\caption{Lists and Descriptions of Variables for Barro data set
\label{Tab:variables:Barro}}
\end{table}

The results of prostate data application may be summarized as follows:
\begin{itemize}
\item[(i)] Ridge estimator is far more effective than LSE and the other regression estimators as expected because of outliers and multicollinearity exist in the prostate data at the same time.
\item[(ii)] The results demonstrate that after ridge estimator ENET gives better results than LSE only at lower 20th quantile and after exceeding in 70th quantile.
\item[(iii)]  For 40th to 80th quantile all of the proposed estimators are more efficient than LSE.
\end{itemize}
The results of barro data application may be summarized as follows:
\begin{itemize}
\item[(i)] Except for the quantiles up to the 20th and after 80th MAD value of PT1 is the lowest so it is outperformed other estimators including LSE. 
\item[(ii)] While become distant from 50th quantile as the MAD value of the PT1 increases its efficiency decreased.
\end{itemize}

We plot some diagnostics for residuals of fit models of both data sets in Figure \ref{Outliers_datas}. According to the Figure \ref{fig:prostate:res}, one may suspect that the observation 39, 69, 95 may be outliers. Also, the results of apply outlierTest function in the car package in R confirm that observations 39 is an outlier. Furthermore, we calculate the ratio of largest eigenvalue to smallest eigenvalue of design matrix of prostate data is approximately $243.302$ which indicates that there multicollinearity in the data set.  According to the Figure \ref{fig:barro:cor}, one may suspect that the observation 74, 116, 120 may be outliers. Again, outlierTest function confirm that observations 74 is an outlier. Furthermore, we calculate the ratio of largest eigenvalue to smallest eigenvalue of design matrix of prostate data is approximately $826.965$ which again indicates that there multicollinearity in the data set.

\begin{figure}
    \centering
    \begin{subfigure}[b]{1\textwidth}
        \centering
        \includegraphics[width=12cm,height=4cm]{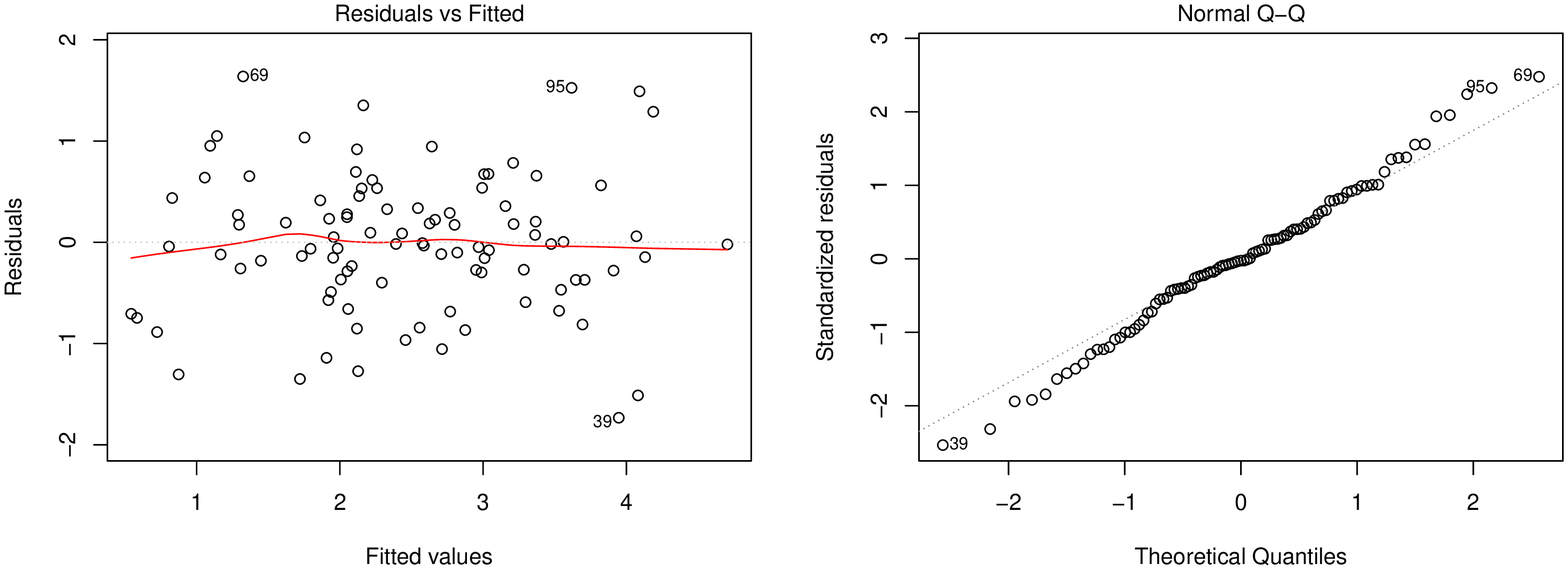}
        \caption{Prostate Data}
        \label{fig:prostate:res}
    \end{subfigure}
    \\
    \begin{subfigure}[b]{1\textwidth}
        \centering
        \includegraphics[width=12cm,height=4cm]{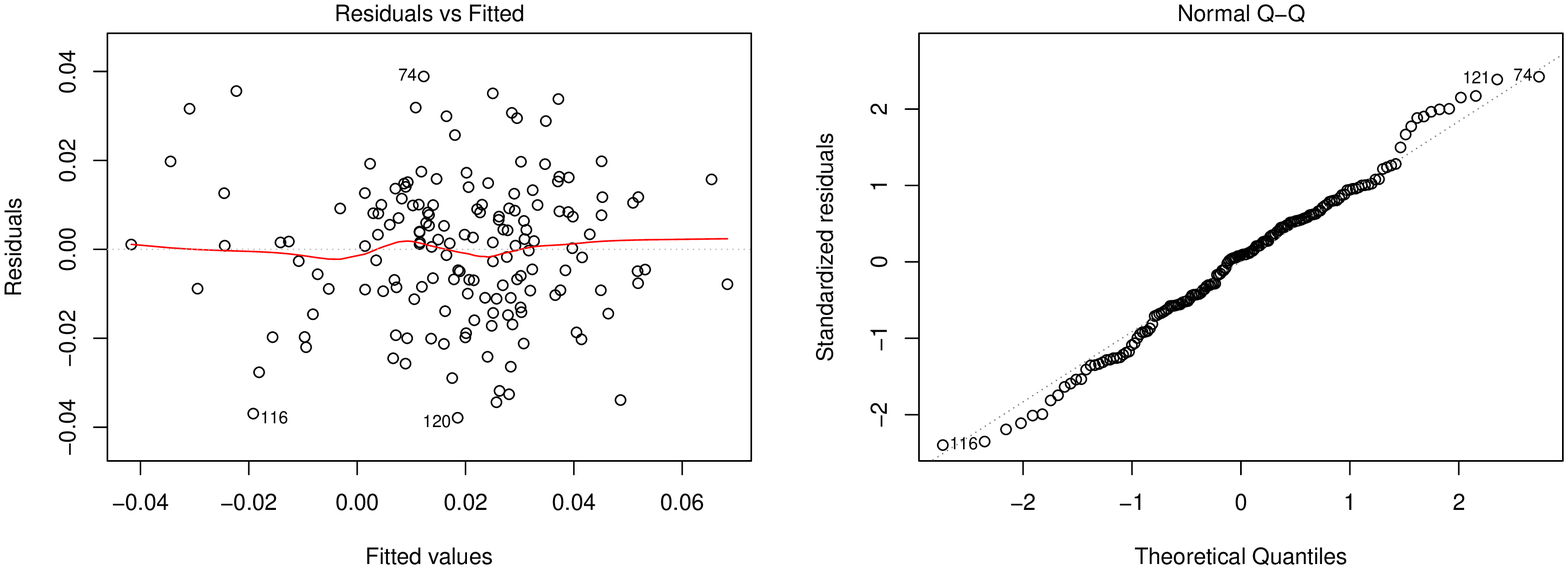}
        \caption{Barro Data}
        \label{fig:barro:cor}
    \end{subfigure}
    \caption{Outlier Plots}
    \label{Outliers_datas}
\end{figure}


\begin{table}[ht]
\begin{adjustbox}{width=1\textwidth}
\centering
\begin{tabular}{rcccccccccc}
\toprule
Data&$\tau$ & 0.1 & 0.2 & 0.3 & 0.4 & 0.5 & 0.6 & 0.7 & 0.8 & 0.9 \\ 
\midrule

Prostate&FM & 5.821(0.083) & 5.520(0.068) & 5.170(0.058) & 4.928(0.057) & 4.723(0.057) & 4.476(0.053) & 4.449(0.057) & 4.900(0.060) & 5.388(0.077) \\ 
&  SM & 3.766(0.061) & 3.379(0.063) & 3.047(0.053) & 2.945(0.045) & 2.956(0.038) & 2.991(0.041) & 3.135(0.044) & 3.244(0.048) & 3.466(0.054) \\ 
&  PT1 & 5.769(0.088) & 4.757(0.078) & 4.172(0.079) & 3.725(0.072) & 3.555(0.065) & 3.374(0.057) & 3.609(0.060) & 3.865(0.063) & 5.095(0.085) \\ 
&  PT2 & 5.789(0.087) & 4.993(0.076) & 4.584(0.078) & 3.997(0.073) & 3.838(0.068) & 3.707(0.061) & 3.735(0.061) & 4.044(0.064) & 5.243(0.084) \\ 
&  PT3 & 5.810(0.086) & 5.082(0.076) & 4.794(0.076) & 4.310(0.069) & 3.986(0.067) & 3.914(0.060) & 3.909(0.063) & 4.110(0.067) & 5.286(0.081) \\ 
&  PT4 & 5.813(0.084) & 5.315(0.074) & 4.986(0.07) & 4.656(0.067) & 4.313(0.063) & 4.045(0.059) & 4.071(0.063) & 4.275(0.067) & 5.346(0.079) \\ 
&  PS & 5.415(0.082) & 4.576(0.063) & 4.216(0.061) & 3.905(0.055) & 3.827(0.050) & 3.640(0.048) & 3.663(0.052) & 3.981(0.053) & 4.908(0.072) \\ 
\cmidrule(lr){3-11}
&  Ridge & 2.035(0.027) & 2.146(0.027) & 2.407(0.027) & 2.529(0.023) & 2.661(0.022) & 2.582(0.023) & 2.471(0.024) & 2.296(0.027) & 2.032(0.026) \\ 
&  Lasso & 4.860(0.081) & 4.403(0.067) & 4.480(0.065) & 4.369(0.058) & 4.434(0.055) & 4.036(0.050) & 3.823(0.054) & 3.817(0.058) & 4.047(0.074) \\ 
&  ENET & 4.760(0.082) & 4.185(0.068) & 4.323(0.064) & 4.165(0.059) & 4.427(0.055) & 3.989(0.050) & 3.679(0.053) & 3.681(0.056) & 3.820(0.074) \\ 
\cmidrule(lr){2-11}
&   &Mean \\
\cmidrule(lr){2-11}
&   LSE&4.469(0.283)\\

   \cmidrule(lr){2-11}
   &$\tau$ & 0.1 & 0.2 & 0.3 & 0.4 & 0.5 & 0.6 & 0.7 & 0.8 & 0.9 \\ 
   \cmidrule(lr){2-11}
   &&$\rm MAD\times 10^2$&$\rm MAD\times 10^2$&$\rm MAD\times 10^2$&$\rm MAD\times 10^2$&$\rm MAD\times 10^2$&$\rm MAD\times 10^2$&$\rm MAD\times 10^2$&$\rm MAD\times 10^2$&$\rm MAD\times 10^2$\\ 
\midrule
Barro  &FM & 3.872(0.021) & 3.238(0.022) & 2.916(0.021) & 2.738(0.023) & 2.766(0.023) & 2.816(0.023) & 2.988(0.026) & 3.298(0.027) & 3.755(0.027) \\ 
&  SM & 3.444(0.012) & 2.689(0.009) & 2.315(0.006) & 2.093(0.006) & 2.034(0.005) & 2.125(0.006) & 2.341(0.007) & 2.743(0.008) & 3.377(0.010) \\ 
&  PT1 & 3.763(0.022) & 2.825(0.024) & 2.350(0.023) & 2.169(0.026) & 2.072(0.027) & 2.188(0.028) & 2.436(0.029) & 2.882(0.031) & 3.699(0.028) \\ 
&  PT2 & 3.826(0.022) & 2.950(0.026) & 2.489(0.026) & 2.227(0.028) & 2.187(0.029) & 2.371(0.029) & 2.501(0.031) & 3.043(0.030) & 3.728(0.028) \\ 
&  PT3 & 3.834(0.022) & 3.009(0.025) & 2.616(0.026) & 2.367(0.029) & 2.450(0.029) & 2.558(0.028) & 2.661(0.030) & 3.130(0.031) & 3.743(0.028) \\ 
&  PT4 & 3.865(0.022) & 3.094(0.025) & 2.761(0.025) & 2.606(0.027) & 2.576(0.028) & 2.677(0.028) & 2.828(0.029) & 3.213(0.030) & 3.750(0.028) \\ 
&  PS & 3.709(0.018) & 2.896(0.018) & 2.530(0.017) & 2.330(0.019) & 2.308(0.019) & 2.395(0.020) & 2.566(0.023) & 2.976(0.024) & 3.616(0.025) \\ 
\cmidrule(lr){3-11}
&  Ridge & 3.441(0.019) & 2.876(0.019) & 2.670(0.021) & 2.631(0.022) & 2.731(0.022) & 2.688(0.021) & 2.748(0.025) & 2.998(0.024) & 3.386(0.025) \\ 
&  Lasso & 3.406(0.019) & 2.892(0.019) & 2.648(0.021) & 2.618(0.022) & 2.717(0.022) & 2.668(0.021) & 2.725(0.024) & 2.952(0.024) & 3.371(0.025) \\ 
&  ENET & 3.380(0.020) & 2.860(0.019) & 2.633(0.021) & 2.616(0.022) & 2.733(0.022) & 2.663(0.020) & 2.735(0.025) & 2.941(0.025) & 3.333(0.025) \\ 
\cmidrule(lr){2-11}
&   &Mean \\   
\cmidrule(lr){2-11}
&   LSE&2.672(0.169)\\

\bottomrule
\end{tabular}
\end{adjustbox}
\caption{The MAD values and standard errors (in parenthesis) of each methods for Prostate and Barro Data}
\end{table}


\section{Conclusions}
\label{conc}

In this paper, we consider pretest and shrinkage estimation strategies based on quantile regression which is a good alternative when the error distribution does not provide the LSE assumptions. To this end, we combined both under-fitted and over-fitted estimation in a optimal way. Further, a Monte Carlo simulation study is conducted to investigate the performance of the suggested estimators in two aspects: one is for investigation of asymptotic properties of listed estimators and the other is for a comparative study with quantile-type estimators, namely Lasso, Ridge and ENET, when the errors follow the Chi-Square, Student t, Laplace, Log-Normal and Skew-Normal distribution. According to first simulation results, the performance of sub-model is the best when the null hypothesis is true. On the other hand, it loses its efficiency and goes to zero when we violate the null hypothesis. The performance of pretest estimations are better than Stein--type estimation when the null hypothesis is true while it loses its efficiency even worse than the full model estimation when we violate with small degree of null hypothesis, and it acts like the full model estimation when the alternative hypothesis is exactly true. Regarding Shrinkage estimation, the PS estimation is always superior to the shrinkage estimator. The performance of PS loses its efficiency when we violate the null hypothesis, however, its performance is superior the full model estimation even the null hypothesis is not true. According to second simulation results, we demonstrated that the listed estimators performs better than quantile $L_1$-type estimators and the LSE when we look different percent quantiles. Also, we applied two real data examples both of which has the problem of multicollinearity and they have outliers. According to the real data example results, our suggested estimators outshine the LSE and the quantile-type estimators with some minor exceptions. Hence, it is safely concluded that the pretest and shrinkage estimation strategies are more efficient than the LSE and quantile-type estimators when the number of nuisance parameter is large. Finally, we obtained asymptotic distributions of the listed estimators. Our asymptotic theory is well supported by numerical analysis.

\section*{Appendix}
\begin{lemma} \label{lem_JB}
Let $\bX$ be $q-$dimensional normal vector distributed as $%
N\left( \boldsymbol{\mu }_{x},\boldsymbol{\Sigma }%
_{q}\right) ,$ then, for a measurable function of of $\varphi ,$ we have
\begin{align*}
\mathbb{E}\left[ \bX\varphi \left( \bX^{\top}\bX%
\right) \right] =&\boldsymbol{\mu }_{x}\mathbb{E}\left[ \varphi \chi _{q+2}^{2}\left(
\Delta \right) \right] \\
\mathbb{E}\left[ \boldsymbol{XX}^{\top}\varphi \left( \bX^{\top}%
\bX\right) \right] =&\boldsymbol{\Sigma }_{q}\mathbb{E}\left[ \varphi
\chi _{q+2}^{2}\left( \Delta \right) \right] +\boldsymbol{\mu }_{x}\boldsymbol{\mu }_{x}^{\top}\mathbb{E}\left[ \varphi \chi
_{q+4}^{2}\left( \Delta \right) \right]
\end{align*}
where $\chi_{v}^{2}\left( \Delta \right)$ is a non-central chi-square distribution with $v$ degrees of freedom and non-centrality parameter $\Delta$.
\end{lemma}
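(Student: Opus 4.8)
The statement is the classical Judge--Bock identity, and the natural route is to reduce it to a canonical Gaussian and then resolve everything through the Poisson--mixture representation of the non-central chi-square law. First I would pass to whitened, rotated coordinates: write $\bZ=\boldsymbol{\Sigma}_q^{-1/2}\bX$ and choose an orthogonal $\mathbf{O}$ so that $\mathbf{O}\boldsymbol{\Sigma}_q^{-1/2}\boldsymbol{\mu}_x=(\delta,0,\dots,0)^{\top}$ with $\delta^2=\boldsymbol{\mu}_x^{\top}\boldsymbol{\Sigma}_q^{-1}\boldsymbol{\mu}_x=\Delta$. In these coordinates the components are independent, the first being $N(\delta,1)$ and the remaining $q-1$ being standard normal, and the quadratic argument is rotation invariant. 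Since both sides of each identity transform covariantly under $\boldsymbol{\Sigma}_q^{1/2}\mathbf{O}^{\top}$, it suffices to compute the moments in the canonical frame and then map back.

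Next I would exploit the parity of the canonical coordinates. For the first identity, any component with zero mean contributes an integrand that is odd in that variable against an even weight and hence integrates to zero; this immediately produces the direction $\boldsymbol{\mu}_x$ on the right-hand side and reduces matters to the single scalar expectation $\mathbb{E}[Y_1\,\varphi(Y_1^2+S)]$ with $Y_1\sim N(\delta,1)$ and $S\sim\chi^2_{q-1}$ independent. For the second identity the same parity argument annihilates every off-diagonal entry, so $\mathbb{E}[\bX\bX^{\top}\varphi(\bX^{\top}\bX)]$ is diagonal in the canonical frame: the entries transverse to the mean reproduce $\boldsymbol{\Sigma}_q$, while the entry along the mean carries the extra $\boldsymbol{\mu}_x\boldsymbol{\mu}_x^{\top}$ contribution.

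The analytic core is then a single scalar computation. I would write the $N(\delta,1)$ density as $(2\pi)^{-1/2}e^{-\delta^2/2}e^{-y^2/2}e^{\delta y}$, expand $e^{\delta y}=\sum_m \delta^m y^m/m!$, integrate term by term, and substitute $u=y^2$ so that $u^{k+1/2}e^{-u/2}$ is recognized as a central chi-square density with a shifted number of degrees of freedom. Summing over the surviving (odd-power) terms and folding in the independent $\chi^2_{q-1}$ block then yields exactly $\delta\sum_k e^{-\Delta/2}(\Delta/2)^k/k!\;\mathbb{E}[\varphi(\chi^2_{q+2+2k})]$, which is the Poisson mixture defining $\delta\,\mathbb{E}[\varphi(\chi^2_{q+2}(\Delta))]$. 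The second-moment entries follow from the same expansion applied to $\mathbb{E}[Y_1^2\varphi]$ and $\mathbb{E}[Y_j^2\varphi]$, the former producing the two-degree-of-freedom shift ($\chi^2_{q+2}$) together with the four-degree-of-freedom shift ($\chi^2_{q+4}$) weighted by $\Delta$.

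The step I expect to be most delicate is the constant bookkeeping in the resummation: after the substitution $u=y^2$ one must check that the product of the Taylor coefficient $\delta^{2k+1}/(2k+1)!$, the Gaussian normalizer, and the Gamma factor $\Gamma(k+3/2)$ from the chi-square density collapses precisely to the Poisson weight $e^{-\Delta/2}(\Delta/2)^k/k!$ (times the overall $\delta$). This rests on the duplication-type identity $\Gamma(k+3/2)=(2k+1)!\sqrt{\pi}/(2\cdot 4^{k}k!)$, and verifying it is what simultaneously pins down the correct degree-of-freedom shifts and the non-centrality $\Delta$. A secondary point worth flagging is that the single non-central $\chi^2$ on the right is literally valid only once the quadratic form is read in the whitened metric (equivalently $\boldsymbol{\Sigma}_q=\bI$), so the reduction in the first paragraph is precisely what legitimizes expressing the answer through one non-central chi-square law.
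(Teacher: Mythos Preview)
Your argument is correct and is, in fact, the standard route to these identities: whiten and rotate to a canonical frame, kill all but the mean-direction components by parity, and then resolve the remaining scalar expectations through the Poisson--mixture representation of the non-central chi-square law. Your flag about the quadratic form needing $\boldsymbol{\Sigma}_q=\bI$ (or the whitened metric) for $\bX^{\top}\bX$ to genuinely be non-central chi-square is also well taken; the lemma as stated in the paper is really the identity-covariance version.

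The paper, however, does not prove the lemma at all: its entire proof reads ``It can be found in \cite{judge-bock1978}.'' So your proposal is not an alternative to the paper's argument but rather supplies what the paper omits by citation. What your approach buys is self-containment and a transparent explanation of \emph{why} the degree-of-freedom shifts $q\mapsto q+2$ and $q\mapsto q+4$ appear (they arise from the extra powers of $u$ in the integrand after the $u=y^2$ substitution, which bump the chi-square shape parameter). The citation approach buys brevity and avoids the Gamma-function bookkeeping you correctly identify as the most error-prone step.
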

\begin{proof} It can be found in \cite{judge-bock1978} \end{proof}
\begin{proof}[Proof of Proposition \ref{prop_vector_dist}]
Using the definition of asymptotic bias, we have
\begin{eqnarray}\label{bias_FM}
\mathcal{B}\left(\hbbeta_{1,\tau}^{\rm FM}\right)=\mathbb{E}\left\{\underset{%
n\rightarrow \infty }{\lim }\sqrt{n}\left( \hbbeta_{1,\tau}^{\rm FM}-\bbeta_{1}\right)\right\}
&=&\mathbf{0}_{p_1}
\end{eqnarray}
and also using the definition of asymptotic covariance, we get
\begin{eqnarray}\label{cov_FM}
\boldsymbol{\Gamma} \left( \FM \right) &=&\mathbb{E}\left\{
\underset{n\rightarrow \infty }{\lim }{n}\left( \hbbeta_{1}^{\rm FM} -\bbeta_{1,\tau}\right)\left( \hbbeta_{1,\tau}^{\rm FM} -\bbeta_{1,\tau}\right) ^{'}\right\} \no\\
&=& Cov\left( \bvt _{1},\bvt _{1}^{'}\right)+\mathbb{E}\left( \bvt _{1}\right) \mathbb{E}\left( \bvt _{1}^{'}\right)\no\\
&=& Cov\left(\bvt_1,\bvt_1^{'}\right) \no\\
&=&w^2\bD_{11.2}^{-1}
\end{eqnarray}
where $\bD_{11.2}^{-1}=(\bD_{11}-\bD_{12}\bD_{22}^{-1}\bD_{21})^{-1}$. \\
Thus, $\bvt _{1}  \sim\mathcal{N} \left(\boldsymbol{0 }_{p_1}, w^2\bD_{11.2}^{-1} \right).$ \\
Similarly, we obtain
\begin{eqnarray}\label{bias_SM}
\mathcal{B}\left( \hbbeta_{1,\tau}^{\rm SM}\right)
&=&\mathbb{E}\left\{ \underset{n\rightarrow \infty }{\lim }\sqrt{n}\left( \boldsymbol{%
\widehat{\beta}}_{1,\tau}^{\rm SM} -\bbeta_{1,\tau}\right) \right\}\no \\
&=&\mathbb{E}\left\{ \underset{n\rightarrow \infty }{\lim }\sqrt{n}\left( \boldsymbol{%
\widehat{\beta}}_{1,\tau}^{\rm FM} +\bD_{11}^{-1}\bD_{12}\boldsymbol{%
\widehat{\beta}}_{2,\tau}^{\rm FM}-\bbeta_{1,\tau}\right) \right\}\no \\
&=&\mathbb{E}\left\{ \underset{n\rightarrow \infty }{\lim }\sqrt{n}\left( \boldsymbol{%
\widehat{\beta}}_{1,\tau}^{\rm FM} -\bbeta_{1,\tau}\right) \right\}+\mathbb{E}\left\{ \underset{n\rightarrow \infty }{\lim }\sqrt{n}\left( \bD_{11}^{-1}\bD_{12}\boldsymbol{%
\widehat{\beta}}_{2,\tau}^{\rm FM}\right) \right\}\no\\
&=&\bD_{11}^{-1}\bD_{12}\kappa \no\\
&=&\bdelta
\end{eqnarray} 
and using the fact that 
$Cov\left(\bvt_2 \bvt_2'\right)=\omega^2\bD_{11}^{-1}$,
the covariance of $\vartheta_2$ is computed as follows;
\begin{eqnarray}\label{cov_SM}
\boldsymbol{\Gamma} \left( \hbbeta_{1,\tau}^{\rm SM} \right) &=&\mathbb{E}\left\{
\underset{n\rightarrow \infty }{\lim }{n}\left( \hbbeta%
_{1,\tau}^{\rm SM} -\bbeta_{1,\tau}\right)\left( \hbbeta%
_{1,\tau}^{\rm SM} -\bbeta_{1,\tau}\right) ^{'}\right\} \no\\
&=& \Cov\left( \bvt_{2}\bvt_{2}^{'}\right) +\mathbb{E}\left( \bvt_{2}\right) \mathbb{E}\left( \bvt_{2}^{'}\right)\no \\
&=&\omega^2\bD_{11}^{-1}+\bdelta\bdelta^{'}.
\end{eqnarray}
Thus, $\bvt _{2}  \sim\mathcal{N} \left(\boldsymbol{\delta },\omega^2 \bD_{11}^{-1}\right)$.
We know that
$\hbbeta_{1,\tau}^{\rm SM}=\hbbeta_{1,\tau}^{\rm FM}+\bD_{11}^{-1}\bD_{12}\hbbeta_{2,\tau}^{\rm FM}
$, so we have
\begin{eqnarray*}
\mathbb{E}\left(\bvt_3\right)
&=&\mathbb{E}\left\{\underset{%
n\rightarrow \infty }{\lim }\sqrt{n}\left(\hbbeta_{1,\tau}^{\rm FM}-\hbbeta_{1,\tau}^{\rm SM}\right)\right\} \\ 
&=& \mathbb{E}\left\{ \underset{%
n\rightarrow \infty }{\lim }-\sqrt{n}(\bD_{11}^{-1}\bD_{12}\hbbeta_{2,\tau}^{\rm FM}) \right\}  \\
&=&-\bdelta
\end{eqnarray*}
and we compute the covariance matrix of $\bvt_3$ as follows:
\begin{eqnarray*}
Cov\left(\bvt_3,\bvt_3'\right)&=&Cov\left(\hbbeta_{1,\tau}^{\rm FM}-\hbbeta_{1,\tau}^{\rm SM}\right) \\
&=&\bD_{11}^{-1}\bD_{12}Var\left(\hbbeta_{2,\tau}^{\rm FM}\right)\bD_{21}\bD_{11}^{-1}\\
&=&\omega^2\bD_{11}^{-1}\bD_{12}\bD_{22.1}^{-1}\bD_{21}\bD_{11}^{-1}\\
&=&\bPhi
\end{eqnarray*}
where $\bD_{22.1}=\bD_{22}-\bD_{21}\bD_{11}^{-1}\bD_{12}$.
Thus, $\bvt_3\sim{\mathcal{N}\left(-\bdelta,\bPhi\right)}$. 
Now, we also need to compute $Cov\left(\bvt_1,\bvt_3\right)$ and $Cov\left(\bvt_2,\bvt_3\right)$. Since
\begin{eqnarray*}
Cov\left(\hbbeta_{1,\tau}^{\rm FM},\hbbeta_{1,\tau}^{\rm SM}\right)&=&Cov\left(\hbbeta_{1,\tau}^{\rm FM},\hbbeta_{1,\tau}^{\rm FM}+\bD_{11}^{-1}\bD_{12}\hbbeta_{2,\tau}^{\rm FM}\right) \\
&=& Cov\left(\hbbeta_{1,\tau}^{\rm FM},\hbbeta_{1,\tau}^{\rm FM}\right)+Cov\left(\hbbeta_{1,\tau}^{\rm FM},\bD_{11}^{-1}\bD_{12}\hbbeta_{2,\tau}^{\rm FM}\right)\\
&=&\omega^2 \bD_{11.2}^{-1}+\omega^2\bD_{12}\bD_{21}\bD_{11}^{-1}
\end{eqnarray*}
We have the following
\begin{eqnarray*}
Cov\left( \bvt_1,\bvt_3 \right) &=& Cov\left(\hbbeta_{1,\tau}^{\rm FM},\hbbeta_{1,\tau}^{\rm FM}-\hbbeta_{1,\tau}^{\rm SM}\right) \\
&=& Cov\left(\hbbeta_{1,\tau}^{\rm FM},\hbbeta_{1,\tau}^{\rm FM}\right)-Cov\left(\FM,\SM\right) \\
&=&-\omega^2\bD_{12}\bD_{21}\bD_{11}^{-1}\\
&=&\bSigma_{12}
\end{eqnarray*}
Thus, we have
\begin{eqnarray*}
\left(\begin{array}{c}\bvt_1\\ \bvt_3\end{array}\right)\sim{N\left(\left[\begin{array}{c}\boldsymbol{0 }_{p_1}\\ -\bdelta\end{array}\right], \begin{bmatrix}\omega^2 \bD_{11.2}^{-1} & \bSigma_{12} \\\bSigma_{21} & \bPhi \end{bmatrix}\right)}
\end{eqnarray*}
$Cov\left(\bvt_2,\bvt_3\right)$ is obtained as follows:
\begin{eqnarray*}
Cov\left(\bvt_2,\bvt_3\right)&=&Cov\left(\hbbeta_{1,\tau}^{\rm SM},\hbbeta_{1,\tau}^{\rm FM}-\hbbeta_{1,\tau}^{\rm SM}\right) \\
&=&Cov\left(\hbbeta_{1,\tau}^{\rm SM},\hbbeta_{1,\tau}^{\rm FM}\right)-Cov\left(\hbbeta_{1,\tau}^{\rm SM},\hbbeta_{1,\tau}^{\rm SM}\right) \\
&=&\omega^2 \bD_{11.2}^{-1}+\omega^2\bD_{12}\bD_{21}\bD_{11}^{-1}-\omega^2\bD_{11}\\
&=& \bSigma^*
\end{eqnarray*}
Thus, the result follows as
$$\left(
\begin{array}{c}
\bvt_{3} \\
\bvt_{2}%
\end{array}%
\right) \sim\mathcal{N}\left[ \left(
\begin{array}{c}
-\boldsymbol{\delta } \\
\boldsymbol{\delta }%
\end{array}%
\right) ,\left(
\begin{array}{cc}
\bPhi & \bSigma^* \\
\bSigma^* & \omega^2 \bD_{11}%
\end{array}%
\right) \right]$$
\end{proof}
\begin{proof} [Proof of Theorem \ref{bias}]
The asymptotic bias of $\FM$ and $\SM$ are respectively given in Equations (\ref{bias_FM}) and (\ref{bias_FM}). Now, we continue with computing the bias of $\PT$ as follows:
\begin{eqnarray}\label{bias_PT}
\mathcal{B}\left(\hbbeta_{1,\tau}^{\rm PT}\right)&=&\mathbb{E}\left\{ \underset{n\rightarrow \infty }{\lim }\sqrt{n}\left( \boldsymbol{%
\widehat{\beta}}_{1,\tau}^{\rm PT} -\bbeta_{1,\tau}\right) \right\} \no\\
&=&\mathbb{E}\left\{ \underset{n\rightarrow \infty }{\lim }\sqrt{n}\left( \boldsymbol{%
\widehat{\beta}}_{1,\tau}^{\rm FM} -\left(\boldsymbol{%
\widehat{\beta}}_{1,\tau}^{\rm FM}-\boldsymbol{%
\widehat{\beta}}_{1,\tau}^{\rm SM}\right)\textrm{I}\left(\mathcal{W}_{n}< c_{n,\alpha}\right)-\bbeta_{1,\tau}\right) \right\}\no\\
&=&\mathbb{E}\left\{ \underset{n\rightarrow \infty }{\lim }\sqrt{n}\left( \boldsymbol{%
\widehat{\beta}}_{1,\tau}^{\rm FM} -\bbeta_{1,\tau}\right) \right\} \no\\
&&-\mathbb{E}\left\{ \underset{n\rightarrow \infty }{\lim }\sqrt{n}\left( \boldsymbol{%
\widehat{\beta}}_{1,\tau}^{\rm FM} -\boldsymbol{%
\widehat{\beta}}_{1,\tau}^{\rm SM}\right)\textrm{I}\left(\mathcal{W}_{n}<\chi^2_{p_2,\alpha}\right) \right\}\no\\
&=&\boldsymbol{\delta }\mathbb{H}_{p_{2}+2}\left( \chi^2_{p_2,\alpha};\Delta \right)
\end{eqnarray}
the last step is due to \cite{judge-bock1978}. The asymptotic bias of $\SS$ is computed as
\begin{eqnarray}\label{bias_S}
\mathcal{B}\left( \hbbeta_{1,\tau}^{\rm S}\right)&=&\mathbb{E}\left\{ \underset{n\rightarrow \infty }{\lim }\sqrt{n}\left( \boldsymbol{%
\widehat{\beta}}_{1,\tau}^{\rm S} -\bbeta_{1,\tau}\right) \right\} \no\\
&=&\mathbb{E}\left\{ \underset{n\rightarrow \infty }{\lim }\sqrt{n}\left( \boldsymbol{%
\widehat{\beta}}_{1,\tau}^{\rm FM} -\left(\boldsymbol{%
\widehat{\beta}}_{1,\tau}^{\rm FM}-\boldsymbol{%
\widehat{\beta}}_{1,\tau}^{\rm SM}\right)(p_2-2)\mathcal{W}_{n}^{-1}-\bbeta_{1,\tau}\right) \right\}\no\\
&=&\mathbb{E}\left\{ \underset{n\rightarrow \infty }{\lim }\sqrt{n}\left( \boldsymbol{%
\widehat{\beta}}_{1,\tau}^{\rm FM} -\bbeta_{1,\tau}\right) \right\}\no\\
&&-\mathbb{E}\left\{ \underset{n\rightarrow \infty }{\lim }\sqrt{n}\left( \FM -\SM \right)(p_{2}-2)\mathcal{W}_{n}^{-1} \right\}\no\\
&=&d\bdelta \mathbb{E}\left\{\chi _{p_{2}+2
}^{-2}\left( \Delta^2\right)\right\}.
\end{eqnarray}
Finally, the asymptotic bias of $\PS$ is obtained as follows
\begin{eqnarray*}\label{bias_PS}
\lefteqn{\mathcal{B}\left( \PS \right)}\\
&=&\mathbb{E}\left\{ \underset{n\rightarrow \infty }{\lim }\sqrt{n}\left( \boldsymbol{%
\widehat{\beta}}_{1,\tau}^{\rm PS} -\bbeta_{1,\tau}\right) \right\}\no \\
&=&\mathbb{E}\left\{ \underset{n\rightarrow \infty }{\lim }\sqrt{n}\left( \boldsymbol{%
\widehat{\beta}}_{1,\tau}^{\rm S} -\left(\boldsymbol{%
\widehat{\beta}}_{1,\tau}^{\rm FM}-\boldsymbol{%
\widehat{\beta}}_{1,\tau}^{\rm SM}\right)\left(1-d\mathcal{W}_{n}^{-1}\right)
\textrm{I}\left(\mathcal{W}_{n}\leq d\right)-\bbeta_{1,\tau}\right) \right\}\no\\
&=&\mathbb{E}\left\{ \underset{n\rightarrow \infty }{\lim }\sqrt{n}\left( \boldsymbol{%
\widehat{\beta}}_{1,\tau}^{\rm S} -\bbeta_{1,\tau}\right) \right\}-\mathbb{E}\left\{ \underset{n\rightarrow \infty }{\lim }\sqrt{n}\left( \boldsymbol{%
\widehat{\beta}}_{1,\tau}^{\rm FM} -\boldsymbol{%
\widehat{\beta}}_{1,\tau}^{\rm SM}\right)\textrm{I}(\mathcal{W}_{n}\leq d) \right\}\no\\
&&+\mathbb{E}\left\{ \underset{n\rightarrow \infty }{\lim }\sqrt{n}\left( \boldsymbol{%
\widehat{\beta}}_{1,\tau}^{\rm FM} -\boldsymbol{%
\widehat{\beta}}_{1,\tau}^{\rm SM}\right)d\mathcal{W}_{n}^{-1}\textrm{I}(\mathcal{W}_{n}<d)\right\}\no\\
&=&d\bdelta\mathbb{E}\left\{\chi _{p_{2}+2}^{-2}(\Delta^2) \right\}+\bdelta \mathbb{H}_{p_{2}+2}\left( d;\Delta \right) -d\bdelta\mathbb{E}\left( \chi _{p_{2}+2}^{-2}(\Delta^2)\textrm{I}\left(\chi _{p_{2}+2}^{2}(\Delta^2)< d\right) \right)\no\\
&=&\bdelta \left\{ d\mathbb{E}\left\{\chi _{p_{2}+2}^{-2}(\Delta^2) \right\}+\mathbb{H}_{p_{2}+2}\left( d;\Delta \right)-d\mathbb{E}\left( \chi _{p_{2}+2}^{-2}(\Delta^2)\textrm{I}\left(\chi _{p_{2}+2}^{2}(\Delta^2)< d\right) \right)\right\}.
\end{eqnarray*}
\end{proof}
\begin{proof}[Proof of Theorem~\ref{risks}]
Firstly, the asymptotic covariance of $\FM$ and $\SM$ are given in Equations (\ref{cov_FM}) and (\ref{cov_SM}).
Now, the asymptotic covariance of $\PT$ is
given by
\begin{eqnarray*}
\boldsymbol{\Gamma} \left( \hbbeta_{1,\tau}^{\rm PT} \right)
&=&\mathbb{E}\left\{ \underset{n\rightarrow \infty }{\lim }{n}\left( \PT -\btau \right)\left( \PT -\btau \right) ^{'}\right\} \\
&=&\mathbb{E}\left\{ \underset{n\rightarrow \infty }{\lim }n\left[ \left( \FM -\tau \right) -\left( \FM  -\SM \right) \textrm{I}\left( %
\mathcal{W}_{n}< c_{n,\alpha }\right) \right] \right. \\
&&\times \left. \left[ \left( \hbbeta_{1,\tau}^{\rm FM} -\bbeta_{1,\tau}\right) -\left( \hbbeta_{1,\tau}^{\rm FM} -\hbbeta_{1,\tau}^{\rm SM} \right) \textrm{I}\left( \mathcal{W}_{n}< \chi^2_{p_2,\alpha}\right) \right] ^{'}\right\} \\
&=&\mathbb{E}\left\{ \left[ \bvt_{1}-\bvt _{3}\textrm{I}\left( \mathcal{W}_{n}<
c_{n,\alpha }\right) \right] \left[ \bvt_{1}-\bvt _{3}\textrm{I}\left( %
\mathcal{W}_{n}< \chi^2_{p_2,\alpha}\right) \right] ^{'}\right\} \\
&=&\mathbb{E}\left\{ \bvt_{1}\bvt_{1}^{'}-2\bvt _{3}\bvt
_{1}^{'}\textrm{I}\left( \mathcal{W}_{n}< \chi^2_{p_2,\alpha}\right) +\bvt
_{3}\bvt_{3}^{'}\textrm{I}\left( \mathcal{W}_{n}< \chi^2_{p_2,\alpha}\right)\right\}
\end{eqnarray*}
Considering, $\mathbb{E}\left\{ \bvt_{1}\bvt_{1}' \right\} = \omega^2\bD_{11.2}^{-1}$
and 
$$
\mathbb{E}\left\{ \bvt_{3}\bvt_{3}^{'}I\left( \mathcal{W}_{n}<
\chi^2_{p_2,\alpha}\right) \right\} = \bPhi \mathbb{H}_{p_{2}+2}\left(
\chi^2_{p_2,\alpha};\Delta \right) +\boldsymbol{\delta \delta }^{'}\mathbb{H}_{p_{2}+4}\left( \chi^2_{p_2,\alpha};\Delta \right)
$$
\begin{eqnarray*}
\lefteqn{\mathbb{E}\left\{ \bvt_{3}\bvt_{1}'I\left( \mathcal{W}_{n}< \chi^2_{p_2,\alpha} \right) \right\}} \\
&=& \mathbb{E}\left\{ \mathbb{E}\left( \bvt_{3}\bvt_{1}^{'}\textrm{I}\left( \mathcal{W}%
_{n} < \chi^2_{p_2,\alpha} \right) |\bvt_{3}\right) \right\}  \\
&=&\mathbb{E}\left\{ \bvt_{3}\mathbb{E}\left( \bvt_{1}^{'}\textrm{I}\left( \mathcal{W}%
_{n}\leq c_{n,\alpha } \right) |\bvt_{3}\right) \right\} \\
&=&\mathbb{E}\left\{ \bvt_{3}\left( \boldsymbol 0+\bSigma_{12}\bPhi^{-1}\left(\bvt_3+\bdelta\right)\right)'\textrm{I}\left( \mathcal{W}%
_{n} < \chi^2_{p_2,\alpha} \right) \right\} \\
&=& \mathbb{E}\left\{ \bvt_{3}\bvt_{3}^{'}\bPhi^{-1}\bSigma_{21}\textrm{I}\left(\mathcal{W}%
_{n}< \chi^2_{p_2,\alpha} \right) \right\}+\mathbb{E}\left\{ \bvt _{3}\bdelta^{'}\bPhi^{-1}\bSigma_{21}\textrm{I}\left(\mathcal{W}%
_{n}< \chi^2_{p_2,\alpha} \right) \right\} \\
&=&\left[ \bPhi \mathbb{H}_{p_{2}+2}\left(
\chi _{p_{2},\alpha }^{2};\Delta \right) +\bdelta \bdelta ^{'}\mathbb{H}_{p_{2}+4}\left( \chi _{p_{2},\alpha
}^{2};\Delta \right)\right]\bPhi^{-1}\bSigma_{21}\\
&& +\bdelta\bdelta^{'}\bPhi^{-1}\bSigma_{21}\mathbb{H}_{p_{2}+2}\left( \chi_{p_{2},\alpha}^{2};\Delta \right)\\
&=& \bSigma_{21} \mathbb{H}_{p_{2}+2}\left(
\chi _{p_{2},\alpha }^{2};\Delta \right)+ \bdelta \bdelta ^{'}\bPhi^{-1}\bSigma_{21}\left[\mathbb{H}_{p_{2}+4}\left( \chi _{p_{2},\alpha
}^{2};\Delta \right)+\mathbb{H}_{p_{2}+2}\left( \chi _{p_{2},\alpha
}^{2};\Delta \right)\right]
\end{eqnarray*}

so finally,
\begin{eqnarray*}\label{cov_PT}
\boldsymbol{\Gamma} \left( \hbbeta_{1,\tau}^{\rm PT} \right)
&=&w^2\bD_{11.2}^{-1}-2\bSigma_{21} \mathbb{H}_{p_{2}+2}\left(
\chi _{p_{2},\alpha }^{2};\Delta \right)\\
&&+ \bdelta \bdelta ^{'}\bPhi^{-1}\bSigma_{21}\left[\mathbb{H}_{p_{2}+4}\left( \chi _{p_{2},\alpha
}^{2};\Delta \right)+\mathbb{H}_{p_{2}+2}\left( \chi _{p_{2},\alpha
}^{2};\Delta \right)\right]\no \\
&&+\bPhi \mathbb{H}_{p_{2}+2}\left(
\chi^2_{p_2,\alpha};\Delta \right) +\boldsymbol{\delta \delta }^{'}\mathbb{H}_{p_{2}+4}\left( \chi^2_{p_2,\alpha};\Delta \right).
\end{eqnarray*}
The asymptotic covariance of $\hbbeta_{1,\tau}^{\rm S}$ is also obtained as
\begin{eqnarray*}
\boldsymbol{\Gamma} \left( \hbbeta_{1,\tau}^{\rm S}\right)
&=&\mathbb{E}\left\{ \underset{n\rightarrow \infty }{\lim }{n}\left( \hbbeta_{1,\tau}^{\rm S}-\bbeta_{1,\tau}\right)\left(
\hbbeta_{1,\tau}^{\rm S}-\bbeta_{1,\tau}\right) ^{'}\right\}  \\
&=&\mathbb{E}\left\{ \underset{n\rightarrow \infty }{\lim }n\left[ \left( \hbbeta_{1,\tau}^{\rm FM}-\bbeta_{1,\tau}\right) -\left(
\hbbeta_{1,\tau}^{\rm FM}-\hbbeta%
_{1,\tau}^{\rm SM}\right) d \mathcal{W}_{n}^{-1}\right] \right.
\\
&&\times \left. \left[ \left( \hbbeta_{1,\tau}^{\rm FM}-\boldsymbol{%
\beta }_{1,\tau}\right) -\left( \hbbeta_{1,\tau}^{\rm FM}-\hbbeta_{1,\tau}^{\rm SM}\right) d \mathcal{W}_{n}^{-1}%
\right] ^{'}\right\}  \\
&=&\mathbb{E}\left\{ \bvt_{1}\bvt_{1}^{'}-2 d
\bvt_{3}\bvt_{1}^{'}\mathcal{W}_{n}^{-1}+d ^{2}\bvt _{3}\bvt_{3}^{'}\mathcal{W}%
_{n}^{-2}\right\} \text{.}
\end{eqnarray*}%
We know that
\begin{equation*}
\mathbb{E}\left( \bvt_{1}\bvt_{1}^{'}\right)=w^2\bD_{11.2}^{-1}
\end{equation*}
Now, we need to compute $\mathbb{E}\left\{ \bvt_{3}\bvt_{3}^{'}\mathcal{W}_{n}^{-2}\right\}$ by using Lemma~\ref{lem_JB}, we have
\begin{equation*}
\mathbb{E}\left\{ \bvt_{3}\bvt_{3}^{'}\mathcal{W}_{n}^{-2}\right\}=\bPhi \mathbb{H}_{p_{2}+2}\left(
\chi _{p_{2},\alpha }^{2};\Delta \right) +\boldsymbol{\delta \delta }^{'}\mathbb{H}_{p_{2}+4}\left( \chi _{p_{2},\alpha
}^{2};\Delta \right)
\end{equation*}
and also,
\begin{eqnarray*}
\mathbb{E}\left\{ \bvt_{3}\bvt_{1}^{'}\mathcal{W}_{n}^{-1}\right\}
&=&\mathbb{E}\left\{ \mathbb{E}\left( \bvt_{3}\bvt_{1}^{'}\mathcal{W}%
_{n}^{-1}|\bvt_{3}\right) \right\} \\
&=&\mathbb{E}\left\{ \bvt_{3}\mathbb{E}\left( \bvt_{1}^{'}\mathcal{W}%
_{n}^{-1}|\bvt_{3}\right) \right\}  \\
&=&\mathbb{E}\left\{ \bvt_{3}\left( \boldsymbol 0+\bSigma_{12}\bPhi^{-1}\left(\bvt_3+\bdelta\right)\right)^{'}\mathcal{W}%
_{n}^{-1} \right\} \\
&=&\mathbb{E}\left\{ \bvt_{3}\left(\bvt_3^{'}-\bdelta^{'}\right)\bPhi^{-1}\bSigma_{21}\mathcal{W}%
_{n}^{-1} \right\} \\
&=&\mathbb{E}\left\{\bvt_3\bvt_3^{'}\bPhi^{-1}\bSigma_{21}\mathcal{W}_{n}^{-1} \right\}+\mathbb{E}\left\{\bvt_3\bdelta^{'}\bPhi^{-1}\bSigma_{21}\mathcal{W}_{n}^{-1}\right\} \\
&=&\bSigma_{21}\mathbb{E}\left\{\chi _{p_{2}+2}^{-2}\left(\Delta\right)\right\}\\
&&+ \bdelta \bdelta'\bPhi^{-1}\bSigma_{21} \left[ \mathbb{E}\left\{\chi _{p_{2}+4}^{-2}\left(\Delta\right)\right\}+\mathbb{E}\left\{\chi _{p_{2}+2}^{-2}\left(\Delta\right)\right\}\right]\\
\end{eqnarray*}%
Therefore, we obtain $\boldsymbol{\Gamma} \left( \SS \right)$ by combining all of the components:
\begin{eqnarray*}\label{cov_S}
\bGamma \left( \SS \right)&=&\omega^{2}\bD_{11.2}^{-1}-2d\left\{\bSigma_{21}\mathbb{E}\left(\chi _{p_{2}+2}^{-2}\left(\Delta\right) \right)+\bdelta\bdelta^{'}\mathbb{E}\left(\chi _{p_{2}+4
}^{-2};\Delta \right)\bPhi^{-1}\bSigma_{21}\right\} \no\\
&&+d^2\left\{\bPhi\mathbb{H}_{p_{2}+2}\left(
\chi _{p_{2},\alpha }^{2};\Delta \right) +\boldsymbol{\delta \delta }^{'}\mathbb{H}_{p_{2}+4}\left( \chi _{p_{2},\alpha
}^{2}\left(\Delta \right)\right)\right\}
\end{eqnarray*}
Finally, we compute  $\boldsymbol{\Gamma} \left( \hbbeta_{1,\tau}^{\rm PS} \right)$:
\begin{eqnarray*}
\boldsymbol{\Gamma} \left( \hbbeta_{1,\tau}^{\rm PS} \right)&=&\mathbb{E}\left\{\underset{n\rightarrow \infty }{\lim }{n}\left( \hbbeta%
_{1,\tau}^{\rm PS} -\bbeta_{1,\tau}\right)\left( \hbbeta%
_{1,\tau}^{\rm PS} -\bbeta_{1,\tau}\right) ^{'}\right\}
\end{eqnarray*}
The covariance of $PS$ :
\begin{eqnarray*}
\hbbeta_{1,\tau}^{\rm PS}&=&\hbbeta_{1,\tau}^{\rm S}-\left(\hbbeta_{1,\tau}^{\rm FM}-\hbbeta_{1,\tau}^{\rm SM}\right)\left(1-d\mathcal{W}_{n}^{-1}\right)\textrm{I}\left(\mathcal{W}_n\leq d\right)
\end{eqnarray*}
\begin{eqnarray*}
\lefteqn{\boldsymbol{\Gamma} \left( \hbbeta_{1,\tau}^{\rm PS} \right)}\\
&=&\mathbb{E}\left\{\underset{n\rightarrow \infty }{\lim }{n}\left( \hbbeta%
_{1,\tau}^{\rm PS} -\bbeta_{1,\tau}\right)\left( \hbbeta%
_{1,\tau}^{\rm PS} -\bbeta_{1,\tau}\right) ^{'}\right\} \\
&=&\mathbb{E}\left\{\underset{n\rightarrow \infty }{\lim }{n}\left[\left(\hbbeta%
_{1,\tau}^{\rm S} -\bbeta_{1,\tau}\right)-\left(\hbbeta_{1,\tau}^{\rm FM}-\hbbeta_{1,\tau}^{\rm SM}\right)\left(1-d\mathcal{W}_n^{-1}\right)\textrm{I}\left(\mathcal{W}_n\leq d\right)\right] \right.
\\
&&\times \left.\left[\left(\hbbeta%
_{1,\tau}^{\rm S} -\bbeta_{1,\tau}\right)-\left(\hbbeta_{1,\tau}^{\rm FM}-\hbbeta_{1,\tau}^{\rm SM}\right)\left(1-d\mathcal{W}_n^{-1}\right)\textrm{I}\left(\mathcal{W}_n \leq d\right)\right]^{'}
\right\} \\
&=&\bGamma \left( \SS \right)\\
&&-2\mathbb{E}\left\{\underset{n\rightarrow \infty }{\lim }{n}\left(\FM-\SM \right)\left(\SS -\btau \right)'\left(1-d\mathcal{W}_n^{-1}\right)\textrm{I}\left(\mathcal{W}_n \leq d\right)\right\} \\
&&+\mathbb{E}\left\{\underset{n\rightarrow \infty }{\lim }{n}\left(\FM-\SM\right)\left(\FM -\SM \right)^{'}\left(1-d\mathcal{W}_n^{-1}\right)^{2}\textrm{I}\left(\mathcal{W}_n \leq d\right)\right\}\\
&=&\bGamma \left( \SS \right)\\
&&-2\mathbb{E}\left\{\underset{n\rightarrow \infty }{\lim }{n}\left(\FM-\SM \right)\left[\left(\FM -\btau\right)-d\left(\FM-\SM \right)\mathcal{W}_{n}^{-1}\right]^{'}\right.\\
&&\left. \times\left(1-d\mathcal{W}_n^{-1}\right)\textrm{I}\left(\mathcal{W}_n\right)\right\} \\
&&+\mathbb{E}\left\{\underset{n\rightarrow \infty }{\lim }{n}\left(\FM-\SM\right)\left(\FM -\SM\right)^{'}\left(1-d\mathcal{W}_n^{-1}\right)^{2}\textrm{I}\left(\mathcal{W}_n\leq d\right)\right\} \\
&=&\bGamma \left( \SS \right)\\
&&-2\mathbb{E}\left\{\bvt_3\bvt_1^{'}\left(1-d\mathcal{W}_n^{-1}\right)\textrm{I}\left(\mathcal{W}_n\leq d\right)-d\bvt_3\bvt_3^{'}\mathcal{W}_n^{-1}\left(1-d\mathcal{W}_n^{-1}\right)\textrm{I}\left(\mathcal{W}_n\leq d\right)\right\} \\
&&+\mathbb{E}\left\{\bvt_3\bvt_3^{'}\left(1-d\mathcal{W}_n^{-1}\right)^{2}\textrm{I}\left(\mathcal{W}_n\leq d\right)\right\} \\
&=&\bGamma \left( \SS \right)-2\mathbb{E}\left\{\bvt_3\bvt_1^{'}\left(1-d\mathcal{W}_n^{-1}\right)\textrm{I}\left(\mathcal{W}_n\leq d\right)\right\}\\
&& +2\mathbb{E}\left\{\bvt_3\bvt_3^{'}d\mathcal{W}_n^{-1}\left(1-d\mathcal{W}_n^{-1}\right)\textrm{I}\left(\mathcal{W}_n\leq d\right)\right\} \\
&&+\mathbb{E}\left\{\bvt_3\bvt_3^{'}\textrm{I}\left(\mathcal{W}_n\leq d\right)\right\} -2\mathbb{E}\left\{\bvt_3\bvt_3^{'}d\mathcal{W}_n^{-1}\textrm{I}\left(\mathcal{W}_n\leq d\right)\right\}\\
&&+\mathbb{E}\left\{\bvt_3\bvt_3^{'}d^2\mathcal{W}_n^{-2}\textrm{I}\left(\mathcal{W}_n\leq d\right)\right\} \\
&=&\bGamma \left( \SS \right)-2\mathbb{E}\left\{\bvt_3\bvt_1^{'}\left(1-d\mathcal{W}_n^{-1}\right)\textrm{I}\left(\mathcal{W}_n\leq d\right)\right\}\\
&& -d^2\mathbb{E}\left\{\bvt_3\bvt_3^{'}\mathcal{W}_n^{-2}\textrm{I}\left(\mathcal{W}_n\leq d\right)\right\}+\mathbb{E}\left\{\bvt_3\bvt_3^{'}\textrm{I}\left(\mathcal{W}_n\leq d\right)\right\}
\end{eqnarray*}
So, we need to the following identities:
 
\begin{eqnarray*}
\mathbb{E}\left\{\bvt_3\bvt_3^{'}\textrm{I}\left(\mathcal{W}_n\leq d\right)\right\}=\bPhi\mathbb{H}_{p_{2}+2}\left(
d;\Delta \right)+\bdelta\bdelta^{'}\mathbb{H}_{p_{2}+4}\left(
d;\Delta \right),
\end{eqnarray*}
\begin{eqnarray*}
\mathbb{E}\left\{\bvt_3\bvt_3^{'}\mathcal{W}_n^{-2}\textrm{I}\left(\mathcal{W}_n\leq d\right)\right\}
&=& \bPhi\mathbb{E}\left[\left(\chi _{p_{2}+2
}^{-4};\Delta \right)\textrm{I}\left(\chi _{p_{2}+2
}^{2}\left(\Delta\right)\leq d \right)\right]\\
&&+\bdelta\bdelta^{'}\mathbb{E}\left[\chi _{p_{2}+2
,\alpha}^{-4}\left(\Delta \right)\textrm{I}\left(\chi _{p_{2}+2
}^{2}\left(\Delta\right)\leq d \right)\right]
\end{eqnarray*} and
\begin{eqnarray*}
\lefteqn{\mathbb{E}\left\{\bvt_3\bvt_1^{'}\left(1-d\bW_n^{-1}\right)\textrm{I}\left(\mathcal{W}_n\leq d\right)\right\}} \\
&=&\mathbb{E}\left\{\mathbb{E}\left[\bvt_3\bvt_1^{'}\left(1-d\bW_n^{-1}\right)\textrm{I}\left(\mathcal{W}_n\leq d\right)|\bvt_3\right]\right\} \\
&=&\mathbb{E}\left\{\bvt_3\mathbb{E}\left[\bvt_1^{'}\left(1-d\mathcal{W}_n^{-1}\right)\textrm{I}\left(\mathcal{W}_n\leq d\right)|\bvt_3\right]\right\} \\
&=&\mathbb{E}\left\{\bvt_3\left[\boldsymbol{0}+\bSigma_{12}\bPhi^{-1}\left(\bvt_3+\bdelta\right)\right]^{'}\left(1-d\bW_n^{-1}\right)\textrm{I}\left(\mathcal{W}_n\leq d\right)\right\} \\
&=&\mathbb{E}\left\{\bvt_3\bvt_3^{'}\bPhi^{-1}\bSigma_{21}\left(1-d\mathcal{W}_n^{-1}\right)\textrm{I}\left(\mathcal{W}_n\leq d\right)\right\} \\
&&+\mathbb{E}\left\{\bvt_3\bdelta^{'}\bPhi^{-1}\bSigma_{21}\left(1-d\bW_n^{-1}\right)\textrm{I}\left(\bW_n\leq d\right)\right\} \\
&=&\bSigma_{21}\mathbb{E}\left\{1-d\chi _{p_{2}+2}^{-2}\left(\Delta\right)\right\}-\bdelta\bdelta^{'}\bPhi^{-1}\bSigma_{21}\mathbb{E}\left\{1-d\chi _{p_{2}+4}^{-2}\left(\Delta\right)\right\}\textrm{I}\left(\chi _{p_{2}+4}^{2}\left(\Delta\right)\leq d\right)\\
&&-\bdelta\bdelta^{'}\bPhi^{-1}\bSigma_{21}\mathbb{E}\left\{1-d\chi _{p_{2}+2}^{-2}\left(\Delta\right)\right\}\textrm{I}\left(\chi _{p_{2}+2}^{2}\left(\Delta\right)\leq d\right)
\end{eqnarray*}
Therefore, we obtain
\begin{eqnarray}\label{cov_PS}
\bGamma \left( \PS \right) &=& \bGamma \left( \SS \right)-2\bSigma_{21}\mathbb{E}\left\{1-d\chi _{p_{2}+2}^{-2}\left(\Delta\right)\right\}\textrm{I}\left(\chi _{p_{2}+2}^{2}\left(\Delta\right)\leq d\right) \no \\
&&+2\bdelta\bdelta^{'}\bPhi^{-1}\bSigma_{21}\mathbb{E}\left\{1-d\chi _{p_{2}+4}^{-2}\left(\Delta\right)\right\}\textrm{I}\left(\chi _{p_{2}+4}^{2}\left(\Delta\right)\leq d\right) \no\\
&&+2\bdelta\bdelta^{'}\bPhi^{-1}\bSigma_{21}\mathbb{E}\left\{1-d\chi _{p_{2}+2}^{-2}\left(\Delta\right)\right\}\textrm{I}\left(\chi _{p_{2}+2}^{2}\left(\Delta\right)\leq d\right) \no \\
&&-d^2\bPhi\mathbb{E}\left\{\chi _{p_{2}+2}^{-4}\left(\Delta\right)\right\}\textrm{I}\left(\chi _{p_{2}+2}^{2}\left(\Delta\right)\leq d\right)\no \\
&&-d^2\bdelta\bdelta^{'}\mathbb{E}\left\{\chi _{p_{2}+2}^{-4}\left(\Delta\right)\right\}\textrm{I}\left(\chi _{p_{2}+2}^{2}\left(\Delta\right)\leq d\right)\no \\
&&+\bPhi\mathbb{H}_{p_{2}+2}\left(
d;\Delta \right)+\bdelta\bdelta^{'}\mathbb{H}_{p_{2}+4}\left(
d;\Delta \right)
\end{eqnarray}
By using the above equations in the definition \eqref{def:risk}, one may directly obtain the asymptotic risks.
\end{proof}

\end{document}